\newcommand{\R}{{\mat R}}
\newcommand{\N}{{\mat N}}
\newcommand{\no}{\nonumber}
\newcommand{\be}{\begin{eqnarray}}
\newcommand{\ben}{\begin{eqnarray*}}
\newcommand{\en}{\end{eqnarray}}
\newcommand{\enn}{\end{eqnarray*}}
\newcommand{\pa}{\partial}
\newcommand{\ov}{\overline}
\newcommand{\I}{{\rm Im}}
\newcommand{\Rt}{{\rm Re}}
\newcommand{\wi}{\widehat}
\newcommand{\wid}{\widetilde}
\newcommand{\mat}{\mathbb}
\newcommand{\se}{\setminus}
\newcommand{\ify}{\infty}
\newcommand{\la}{\lambda}
\newcommand{\tr}{\triangle}
\newtheorem{theorem}{Theorem}[section]
\newtheorem{lemma}[theorem]{Lemma}
\newtheorem{definition}[theorem]{Definition}
\newtheorem{remark}[theorem]{Remark}
\definecolor{hw}{rgb}{0,0,0}
\begin{document}
\renewcommand{\theequation}{\arabic{section}.\arabic{equation}}
\title{\bf Locating a complex inhomogeneous medium with an approximate factorization method}
\author{Fenglong Qu\thanks{School of Mathematics and Informational
Science, Yantai University, Yantai, Shandong, 264005, China ({\tt
fenglongqu@amss.ac.cn})}
\and Haiwen Zhang\thanks{NCMIS and Academy of Mathematics and Systems Science, Chinese Academy of Sciences,
Beijing 100190, China ({\tt zhanghaiwen@amss.ac.cn})}}
\date{}

\maketitle

\vspace{.2in}
%
%
%

\begin{abstract}

Consider the inverse problem of scattering of time-harmonic
acoustic waves by an inhomogeneous medium with complex refractive index. We show that an approximate factorization method can be applied to reconstruct the support of the complex inhomogeneous medium from the far-field data. Numerical examples are also provided to illustrate the practicability of the inversion algorithm.

\vspace{.2in}
{\bf Keywords:} Approximate factorization method, inverse scattering, far-field pattern, inhomogeneous medium.
\end{abstract}

\section{Introduction}
\setcounter{equation}{0}

In this paper, we study the inverse problem of recovering an inhomogeneous medium with complex refractive index from the far-field data. This problem occurs in lots of areas of application such as radar and sonar, medical imaging and non-destructing testing. Precisely, let an open bounded obstacle $D$ denote the inhomogeneous medium with a $C^2$-smooth boundary $\pa D$. Assume that $\ov{D}=\bigcup_{j=1}^{{\color{hw}K}} \ov{D}_j$ with $D_{j_1}\cap D_{j_2}=\emptyset$ if $j_1\neq j_2$. Assume further that $D$ is filled with an inhomogeneous material characterized by the refractive index $n(x)\in L^\infty(D)$ with $\Rt[n(x)]>1$ or $\Rt[n(x)]<1$ in $D_l$ $(1\leq l \leq {\color{hw}K})$, $\left|\Rt[n(x)]-1\right|\geq c$ in $D$ for some positive constant $c$, $\I[n(x)]\geq 0$ {\color{hw}in D}, and the exterior $\R^3\se \ov{D}$ is filled with a homogeneous material with the refractive index $n(x)=1$. It should be remarked that we shall  in the current paper consider the case of complex refractive index, that is, there at least exists two subdomains $D_{l_1}, D_{l_2}$ such that $\Rt[n(x)]>1$ in  $D_{l_1}$ and $\Rt[n(x)]<1$ in $D_{l_2}$ with $1\leq l_1\neq l_2 \leq {\color{hw}K}$. Then the scattering of time-harmonic acoustic waves by the complex inhomogeneous medium $D$ can be modeled by the inhomogeneous Helmholtz equation
\be\label{1.1}
\tr u(x) +k^2n(x)u(x)=0 \qquad \text{in}\;\R^3.
\en
Here, {\color{hw} $k>0$ is the wave number and} $u=u^i+u^s$ denotes the total field with
{\color{hw} the incident wave $u^i$ and the scattered field $u^s$, where
$u^s$ satisfies the Sommerfeld radiation condition
\be\label{1.2}
\frac{\pa u^s}{\pa |x|}- iku^s=\mathcal{O}\left(\frac{1}{|x|^2}\right),
\qquad {\rm as}\;\; |x|\to\ify.
\en}

Moreover,
{\color{hw}it is known that the scattered field $u^s$} has the asymptotic behavior \cite{CK2013}
\be\label{1.3}
u^s(x)=\frac{e^{ik|x|}}{4\pi |x|}u_\ify(\wi{x})+\mathcal{O}\Big(\frac{1}{|x|^2}\Big),
\qquad {\rm as}\;\; |x|\to\ify,
\en
uniformly for all $\wi{x}=x/|x|$, where $u_\ify$ is known as the far-field pattern of $u^s$, which is an analytic function
defined on $\mathbb{S}^2:=\{x\in\R^3:|x|=1\}$.
{\color{hw}In the present paper, we consider $u^i$ to be the incident plane wave which is
given by $u^i=u^i(x;d):=e^{ikx\cdot d}$, where $d\in\mathbb{S}^2$ is the incident direction. Accordingly, the total field, the scattered field and the far-field pattern
are denoted as
$u(x;d)$, $u^s(x;d)$ and $u_\infty(\wi{x};d)$, respectively.}

By using a variational approach, it can be easily shown that the problem (\ref{1.1})-(\ref{1.2}) has a unique solution (see, e.g., \cite{CK2013} or \cite{QYZ2017} for the case when $D$ contains buried objects inside). In the current paper, we are interested in the inverse problem of reconstructing the shape and location of the inhomogeneous medium $D$ from a knowledge of the far-field pattern $u_\infty$ for incident plane waves. The uniqueness of this inverse problem has been established in \cite{YZZ} for the case when $n$ is an unknown constant, in \cite{QuIPI} for the case when $n$ is an unknown piecewise constant, and in \cite{IV, KP1998, QYZIP} for other related inverse medium scattering problems.

In this paper, we study the factorization method as an analytic as well as a numerical tool to reconstruct the shape and location of the inhomogeneous medium $D$ with complex refractive index $n(x)$. For the case when  $\Rt[n(x)]>1$ or $\Rt[n(x)]<1$ in $D$, based on a Lippmann-Schwinger integral equation method, \cite{K1999} proved the validity of the factorization method for recovering the inhomogeneous obstacle $D$. Recently, a factorization method has been developed in \cite{YZZ2013} in determining a penetrable obstacle $D$ with unknown buried objects inside in the case when the solution is discontinuous across the interface $\pa D$, that is, $u|_+=u|_-$, $\pa_\nu u|_+=\la\pa_\nu u|_-$ on $\pa D$ for $\la\not=1$. However, the method used in \cite{YZZ2013} can not be applied to the case when the solution is continuous across the  interface $\pa D$, {\color{hw}that is, $\lambda =1$} (see \cite[Remark 2.5]{YZZ2013}). {\color{hw}To overcome this difficulty,} in \cite{QYZ2017} an approximate factorization method was proposed to solve the same inverse problem as that in \cite{YZZ2013} for the case when the solution is continuous across the  interface $\pa D$.
{\color{hw} However, the factorization method in \cite{K1999, QYZ2017, YZZ2013} depends closely on the assumption that $\Rt[n(x)]>1$ or $\Rt[n(x)]<1$ in $D$.
Therefore, the techniques developed in \cite{K1999, QYZ2017, YZZ2013} can not be directly extended to deal with the case when $\ov{D}=\bigcup_{j=1}^{{\color{hw}K}} \ov{D}_j$ with $\Rt[n(x)]>1$ in $D_{l_1}$ and $\Rt[n(x)]<1$ in  $D_{l_2}$ for some $1\leq l_1\neq l_2 \leq {\color{hw}K}$ which is the
case of the inverse problem under consideration.} The reader is referred to \cite{A2005, Hu2013, L2013} for applications of factorization method for the scattering by diffraction gratings, to \cite{L2008} for the photonics and rough surfaces problems, to \cite{K2016} and \cite{YZZ2014} for the cases of the conductive boundary condition and the generalized impedance boundary condition, and to \cite{KR2012, YHXZ, Hu2016} for the fluid-solid interaction problems. See also \cite{Hu2014} for the rigorous mathematical justification of the factorization method with near-field data. For more detailed overview of the factorization method, we refer to the monograph \cite{K2008} and the references therein, where many related inverse problems for different kinds of partial differential equations are studied by using this method.

For the inverse medium scattering problems, there are also lots of different reconstruction algorithms; see, e.g., \cite{K2002} for the music-algorithm method, \cite{P2005} for the singular sources method, \cite{K2012, H2011, ZZ2013} for the iteration method and \cite{CCS2012, C2010, Kim2012, M2005} for the linear sampling method.

In the present paper, we are motivated by \cite{K1999, QYZ2017} to solve the inverse problem of locating the inhomogeneous medium by developing an approximate factorization method in the case when the medium is filled with an inhomogeneous material characterizing by the complex refractive index. Due to the close dependence of the classical factorization method on the complex refractive index in $D$, we attempt to construct a sequence of perturbed operators $F_m$ of the far-field operator $F$ in a suitable way such that $F_m$ satisfies the Range Identity in \cite[Theorem 2.15]{K2008} for each $m\in\N_+$. Consequently, we can reconstruct the shape and location of medium $D$ from the spectral data of $F_m$
for each $m\in\N_+$. Relying on the construction of $F_m$, we can easily show that $\|F_m-F\|_{L^2(\mathbb{S}^2)}\to 0$ as $m\to \infty$. Thus the exact far-field data $F$ can be regraded as a sufficiently small perturbation of $F_{m_0}$ for some large enough $m_0\in \N_+$. This implies that, for
the noise level $\delta$, the noisy operator $F^\delta$ for $F$ is also a small perturbation of the noisy operator $F^\delta_{m_0}$ for $F_{m_0}$. Therefore, the shape and location of the medium $D$ can be numerically reconstructed by using the spectral data of $F$ and $F^\delta$. Numerical examples that carried out later indeed demonstrate the
practicability of the inversion algorithm.

The remaining part of this paper is organized as follows. In section 2, we propose an approximate factorization method for our inverse problem of locating the inhomogeneous medium with complex refractive index. Numerical examples are provided to illustrate the efficiency of the inversion algorithm in section 3.
Some remarks are also given at the end of section 3.

\section{Approximate Factorization Method}\label{se2}
\setcounter{equation}{0}

In this section, we shall develop an approximate factorization method to study the inverse problem in determining the shape and location of an inhomogeneous medium with complex refractive index. For simplicity we only consider the case when $\ov{D}=\ov{D}_1\cup \ov{D}_2$ with $n=n_1$ in $D_1$ satisfying that $\Rt[n_1(x)]-1\geq c$, $\I [n_1(x)]\geq 0$ and $n=n_2$ in $D_2$ satisfying that $\Rt[n_2(x)]-1\leq-c$, $\I [n_2(x)]\geq 0$ for some positive constant $c$.
We first consider the following general problem
\be\label{3.1}
\left\{
  \begin{array}{ll}
    \tr w + k^2w = 0, & \text{in}\;\R^3\se \ov{D},\\
    \tr w + k^2n_1w = k^2(1-n_1)f_1, & \text{in}\; D_1,\\
    \tr w + k^2n_2w = k^2(1-n_2)f_2, & \text{in}\; D_2,\\
    {\color{hw}w|_+-w|_-=0,\;
    \frac{\pa w}{\pa\nu}|_+-\frac{\pa w}{\pa\nu}|_-=0,}\;& {\color{hw}\text{on}\;\pa D_1\cup\pa D_2,}\\
    {\color{hw}\frac{\pa w}{\pa |x|}- ikw=\mathcal{O}\left(\frac{1}{|x|^2}\right),} & {\color{hw}\text{as}\;|x|\to\ify,}
  \end{array}
\right.
\en
where $f_1\in L^2(D_1), f_2\in L^2(D_2)$. Let $u$ be the total field of the problem (\ref{1.1})-{\color{hw}(\ref{1.2})} corresponding to the incident field $u^i=e^{ikx\cdot d}$, it then follows that $w:=u-u^i$ satisfies the problem (\ref{3.1}) with
{\color{hw}$f_1=u^i|_{D_1}$ and $f_2=u^i|_{D_2}$}.
We now introduce the solution operator $G:Y\mapsto L^2(\mathbb{S}^2)$ by
$$G(f_1,f_2)^T=w_\infty,$$
where $Y:=L^2(D_1)\times L^2(D_2)$ and $w_\infty$ is the far-field pattern of the solution $w$ of the problem (\ref{3.1}) with the given data $(f_1,f_2)^T\in Y$.

{\color{hw}For the solution operator $G$, we have the following lemma.}

\begin{lemma}\label{le3.1}
The solution operator $G$ is compact with dense range in $L^2(\mathbb{S}^2)$.
\end{lemma}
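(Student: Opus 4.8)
The plan is to write $G$ as a composition in which one factor is smoothing --- this will give compactness --- and to obtain the dense range by showing that the adjoint $G^{*}$ is injective, using the unique solvability of the forward transmission problem (\ref{3.1}) together with unique continuation for the Helmholtz equation. Throughout I would identify $Y\cong L^{2}(D)$ by gluing $f_{1},f_{2}$ into a single $f\in L^{2}(D)$ with $f|_{D_{j}}=f_{j}$.

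\textbf{Compactness.} First I would combine the interior and exterior equations of (\ref{3.1}) into $\tr w+k^{2}w=k^{2}(1-n)(w+f)$ in $D$ and $\tr w+k^{2}w=0$ in $\R^{3}\se\ov D$, so that the transmission and radiation conditions yield the volume-potential representation
\[
w(x)=-k^{2}\int_{D}\Phi(x,y)\,(1-n(y))\,\psi(y)\,dy,\qquad \psi:=(w+f)|_{D},
\]
with $\Phi$ the outgoing fundamental solution of $\tr+k^{2}$, and hence, reading off the far field as in (\ref{1.3}),
\[
w_{\infty}(\wi{x})=-k^{2}\int_{D}e^{-ik\wi{x}\cdot y}\,(1-n(y))\,\psi(y)\,dy .
\]
Writing $\psi=L^{-1}f$, where $L:=I+k^{2}V\bigl((1-n)\,\cdot\,\bigr)$ is the Lippmann--Schwinger operator and $V$ the volume potential on $D$, the well-posedness of (\ref{3.1}) --- which follows by the same variational argument plus Fredholm theory that gives the well-posedness quoted in the introduction, since $V$ is compact on $L^{2}(D)$ and $L$ is injective --- shows $L$ is boundedly invertible on $L^{2}(D)\cong Y$. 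Then $G=-k^{2}\,H\circ M_{1-n}\circ L^{-1}$, where $M_{1-n}$ is multiplication by $1-n\in L^{\infty}(D)$ and $H\colon L^{2}(D)\to L^{2}(\mathbb{S}^{2})$, $(H\varphi)(\wi{x}):=\int_{D}e^{-ik\wi{x}\cdot y}\varphi(y)\,dy$, has the bounded kernel $e^{-ik\wi{x}\cdot y}$ and is therefore Hilbert--Schmidt, in particular compact; so $G$ is compact.

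\textbf{Dense range.} It suffices to prove $\ker G^{*}=\{0\}$. I would take $g\in L^{2}(\mathbb{S}^{2})$ orthogonal to the range of $G$ and introduce the Herglotz wave function $v_{g}(y):=\int_{\mathbb{S}^{2}}e^{iky\cdot\theta}g(\theta)\,ds(\theta)$. Interchanging the order of integration in $\langle Gf,g\rangle_{L^{2}(\mathbb{S}^{2})}$ and using the formula for $w_{\infty}$, one obtains for every $f\in Y$
\[
0=\langle Gf,g\rangle_{L^{2}(\mathbb{S}^{2})}=-k^{2}\int_{D}(1-n(y))\,\psi(y)\,\ov{v_{g}(y)}\,dy,\qquad \psi=(w+f)|_{D}.
\]
Since $L$ maps $L^{2}(D)$ onto itself, $\psi$ ranges over all of $L^{2}(D)$ as $f$ varies, so $\int_{D}(1-n)\,\psi\,\ov{v_{g}}\,dy=0$ for every $\psi\in L^{2}(D)$; as the assumption $|\Rt[n]-1|\ge c$ forces $1-n\ne0$ a.e.\ in $D$, this gives $v_{g}=0$ a.e.\ in $D$. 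Since $v_{g}$ is an entire solution of $\tr v_{g}+k^{2}v_{g}=0$, unique continuation then yields $v_{g}\equiv0$ in $\R^{3}$, and the injectivity of the Herglotz operator $g\mapsto v_{g}$ on $L^{2}(\mathbb{S}^{2})$ finally gives $g=0$. Hence $G^{*}$ is injective and $G$ has dense range.

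The only step carrying real content will be the surjectivity of $f\mapsto\psi=(w+f)|_{D}$ onto $L^{2}(D)$, which is exactly where the unique solvability of (\ref{3.1}) is used --- via Fredholm theory for $L=I+k^{2}V\bigl(M_{1-n}\,\cdot\,\bigr)$ (compactness of the volume potential plus injectivity from uniqueness). The remaining ingredients --- the far-field representation, the Hilbert--Schmidt bound on $H$, unique continuation for Helmholtz, and injectivity of the Herglotz operator --- are standard and can be cited from, e.g., \cite{CK2013,K2008}.
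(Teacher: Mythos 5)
Your proposal is correct, and although it follows the same skeleton as the paper (compactness plus injectivity of the adjoint $G^{*}$), both halves are carried out by a genuinely different mechanism. For compactness the paper simply invokes interior elliptic regularity for the solution of (\ref{3.1}), whereas you factor $G=-k^{2}H\circ M_{1-n}\circ L^{-1}$ through the Lippmann--Schwinger operator and use that the far-field kernel $e^{-ik\wi{x}\cdot y}$ is Hilbert--Schmidt; this is more explicit and has the side benefit of packaging the well-posedness of (\ref{3.1}) as the bounded invertibility of $L$, which you then reuse. For the dense range the paper computes $G^{*}\varphi$ via Green's identities and obtains formula (\ref{3.3}) in terms of the \emph{total} field $\widetilde{w}$ of an auxiliary scattering problem with Herglotz incident wave; its injectivity argument therefore needs Holmgren's theorem, the observation that a nonzero Herglotz function cannot be radiating, and finally the injectivity of the Herglotz operator. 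Your version instead pairs $w_{\infty}$ against $g$ by Fubini, expressing $\langle Gf,g\rangle$ through the \emph{incident} Herglotz function $v_{g}$ tested against $\psi=L^{-1}f$; since $L^{-1}$ is onto, this forces $(1-n)\ov{v_{g}}=0$ a.e., the hypothesis $\left|\Rt[n]-1\right|\geq c$ removes the factor $1-n$, and you conclude from analyticity of $v_{g}$ plus \cite[Theorem 3.19]{CK2013} alone --- the detour through the scattered field and the radiation-condition dichotomy is avoided. The two computations are consistent (they are related by a reciprocity identity), and your route is arguably cleaner. One cosmetic caveat: your letter $H$ collides with the paper's incident operator $H$ of (\ref{3.5})--(\ref{3.6}); your operator is essentially the paper's $H^{*}$ up to the splitting of $D$ into $D_{1},D_{2}$, so the clash is harmless but should be renamed before merging.
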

\begin{proof}
{\color{hw}Firstly,} the compactness of the operator $G$ follows easily from the interior regularity results of elliptic equations.
{\color{hw}Secondly,} we need to prove the denseness of the range of $G$ in $L^2(\mathbb{S}^2)$, it suffices to show that the $L^2$-adjoint operator $G^*$ of $G$ is injective.

Assume that $w$ is the solution of the problem (\ref{3.1}) with the data $(f_1,f_2)^T\in Y$, {\color{hw}$w_\infty$ is the far-field pattern of the solution $w$} and
$\widetilde{w}$ is the {\color{hw}total field} of the problem (\ref{1.1})-(\ref{1.2}) with the incident field
\ben
\widetilde{w}^i(y)=\int_{\mathbb{S}^2}e^{-ikd\cdot y}\ov{\varphi(d)}ds(d),\qquad \text{for\;}y\in\R^3.
\enn
It then follows from the Green's theorem that
\ben
w_\ify(d)
= \int_{\pa D}\left(\frac{\pa e^{-ikd\cdot y}}{\pa\nu(y)}w(y)-e^{-ikd\cdot y}\frac{\pa w(y)}{\pa\nu(y)}\right)ds(y),
\enn
which combines with {\color{hw}the definition of} the incident field $\widetilde{w}^i$ further implies that, for $\varphi\in L^2(\mathbb{S}^2)$,
\be\label{3.2}
(G(f_1,f_2)^T, \varphi)_{L^2(\mathbb{S}^2)}
=\int_{\pa D}\left(\frac{\pa \widetilde{w}^i}{\pa\nu}w-\widetilde{w}^i\frac{\pa w}{\pa\nu}\right)ds.
\en
Notice that
\ben
\int_{\pa D}\left(\frac{\pa \widetilde{w}^s}{\pa\nu}w-\widetilde{w}^s\frac{\pa w}{\pa\nu}\right)ds=0,
\enn
where $\widetilde{w}^s:=\widetilde{w}-\widetilde{w}^i$ is the scattered field of the problem (\ref{1.1}). We then derive that
\ben
\int_{\pa D}\left(\frac{\pa \widetilde{w}^i}{\pa\nu}w-\widetilde{w}^i\frac{\pa w}{\pa\nu}\right)ds
&=&\int_{\pa D}\left(\frac{\pa \widetilde{w}}{\pa\nu}w -\widetilde{w}\frac{\pa w}{\pa\nu}\right)ds\\
&=&\int_{D_1}k^2(n_1-1)f_1\widetilde{w} dx+\int_{D_2}k^2(n_2-1)f_2\widetilde{w}dx.
\enn
This together with (\ref{3.2}) yields that
\be\label{3.3}
G^*\varphi = \left(k^2({\color{hw}\ov{n_1}}-1)\ov{\widetilde{w}_1}, k^2({\color{hw}\ov{n_2}}-1)\ov{\widetilde{w}_2}\right)^T
\en
with $\widetilde{w}_j:=\widetilde{w}|_{D_j}, j=1,2.$
Let now $G^*\varphi=0$, then $\widetilde{w}_1=0$ in $D_1$, $\widetilde{w}_2=0$ in $D_2$, which further implies $\widetilde{w}|_{\pa D}=\frac{\pa \widetilde{w}}{\pa\nu}|_{\pa D}=0$, this together with Holmgren's uniqueness theorem gives $\widetilde{w}=\widetilde{w}^i+\widetilde{w}^s=0$ in $\R^3\se \ov{D}$. Since
$\widetilde{w}^i$ does not satisfy the radiation condition {\color{hw}if $\varphi\neq0$}, {\color{hw}we obtain that $\widetilde{w}^i=0$ in $\R^3\se \ov{D}$}.
It then follows from
Theorem {\color{hw}3.19} in \cite{CK2013} that $\varphi=0$, and thus $G^*$ is injective, which proves the lemma.
\end{proof}
Introduce the far-field operator $F: L^2(\mathbb{S}^2)\mapsto L^2(\mathbb{S}^2)$ by
\be\label{3.4}
(Fg)(\wi{x}) = \int_{\mathbb{S}^2}u_\ify(\wi{x};d)g(d)ds(d)\qquad\text{for\;}g\in L^2(\mathbb{S}^2),
\en
where $u_\ify$ is the far-field pattern of the scattered field $u^s$ of the problem (\ref{1.1})-(\ref{1.2}) with the incident wave $u^i=e^{ikx\cdot d}$. Define the incident operator $H: L^2(\mathbb{S}^2)\mapsto Y$ by $H=(H_1,H_2)^T$ with
\be\label{3.5}
&&(H_1g)(x)=\int_{\mathbb{S}^2}e^{ikx\cdot d }g(d)ds(d)\qquad \text{for\;}x\in D_1,\\ \label{3.6}
&&(H_2g)(x)=\int_{\mathbb{S}^2}e^{ikx\cdot d }g(d)ds(d)\qquad \text{for\;}x\in D_2.
\en
It then follows from the superposition principle and the definition of the operator $G$ that $F=GH$. In order to {\color{hw}derive} the factorization of the far-field operator $F$, we next introduce the operator $\textbf{V}_j$ {\color{hw}that defined as follows: for $\varphi_j\in L^2(D_j)$,}
\be\label{3.8}
&&(\textbf{V}_j\varphi_j)(x) = \int_{D_j}\Phi(x,y)\varphi_j(y)dy\qquad \text{for\;}x\in D_j, \ j=1,2
\en
and the restriction operators $\textbf{V}_j^{(m)}:=\textbf{V}_j|_{D_m}$ $(j,m=1,2)$.
{\color{hw}Here, $\Phi(x,y)=\frac{e^{ik|x-y|}}{4\pi|x-y|}$ is the fundamental solution of Helmholtz equation $\tr u+k^2u=0$ in $\R^3 \se \{y\}$.}
Then we define the operator $T:Y\mapsto Y$ by
\be\label{3.70}
T=\left(
    \begin{array}{cc}
      q_1{\color{hw}I_{D_1}}-\textbf{V}_1^{(1)} & -\textbf{V}_2^{(1)}\\
     -\textbf{V}_1^{(2)} & q_2{\color{hw}I_{D_2}}-\textbf{V}_2^{(2)} \\
    \end{array}
  \right),
\en
where {\color{hw}for $j=1,2$, $q_j:=\frac{1}{k^2(n_j-1)}$ and ${I_{D_j}}$ are the identity operators on $L^2(D_j)$}.

{\color{hw}We have the following lemma on the property of the operator $T$.}

\begin{lemma}\label{le3.12}
The operator $T$ is invertible and
\be\label{3.16}
T^{-1}=T_1^{-1}+T_{com}
\en
{\color{hw}with $T_1$ to be an  invertible operator given by
\ben
T_1=\left(\begin{array}{cc}
                 {q_1}{\color{hw}I_{D_1}}&0 \\
                 0 & {q_2}{\color{hw}I_{D_2}} \\
                   \end{array}\right)
\enn
and the compact part $T_{com}=-T_1^{-1}T_2T^{-1}$ where $T_2$ is a compact operator}.
\end{lemma}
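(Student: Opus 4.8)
The plan is to exhibit $T$ as a compact perturbation of the invertible block-diagonal operator $T_1$, apply the Riesz--Fredholm theory, and reduce the only nontrivial point -- the injectivity of $T$ -- to the uniqueness of the direct problem (\ref{3.1}). Concretely I would write $T=T_1+T_2$ with
\[
T_2=-\left(\begin{array}{cc} \textbf{V}_1^{(1)} & \textbf{V}_2^{(1)}\\ \textbf{V}_1^{(2)} & \textbf{V}_2^{(2)}\end{array}\right).
\]
Each $\textbf{V}_j^{(m)}$ is a volume potential on a bounded subdomain of $\R^3$ with the weakly singular kernel $\Phi$, hence it is Hilbert--Schmidt and, in particular, a compact operator on $L^2$; therefore $T_2:Y\to Y$ is compact. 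Since $|n_j-1|\geq|\Rt[n_j]-1|\geq c$ a.e.\ in $D_j$, the multiplication operator $q_jI_{D_j}$ is boundedly invertible with bounded inverse $k^2(n_j-1)I_{D_j}$, so $T_1$ is invertible; thus $T=T_1(I+T_1^{-1}T_2)$ with $T_1^{-1}T_2$ compact, and by the Fredholm alternative the invertibility of $T$ follows once $T$ is shown to be injective.

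To prove injectivity, let $\varphi=(\varphi_1,\varphi_2)^T\in Y$ satisfy $T\varphi=0$ and set
\[
v(x):=\int_{D_1}\Phi(x,y)\varphi_1(y)\,dy+\int_{D_2}\Phi(x,y)\varphi_2(y)\,dy,\qquad x\in\R^3.
\]
By standard properties of the volume potential, $v\in H^2_{{\rm loc}}(\R^3)$, $v$ satisfies the Sommerfeld radiation condition, and $\tr v+k^2v=-(\varphi_1\chi_{D_1}+\varphi_2\chi_{D_2})$ in $\R^3$. Read componentwise, $T\varphi=0$ says precisely that $q_j\varphi_j=v|_{D_j}$, i.e.\ $\varphi_j=k^2(n_j-1)v|_{D_j}$ in $D_j$ for $j=1,2$. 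Substituting this back shows $\tr v+k^2v=0$ in $\R^3\se\ov D$ and $\tr v+k^2n_jv=0$ in $D_j$, while the $H^2_{{\rm loc}}$-regularity provides the continuity of $v$ and $\pa v/\pa\nu$ across $\pa D_1\cup\pa D_2$. Hence $v$ solves the homogeneous problem (\ref{3.1}) (equivalently, (\ref{1.1})--(\ref{1.2}) with zero incident field), so by uniqueness $v\equiv0$ in $\R^3$; consequently $\varphi_j=k^2(n_j-1)v|_{D_j}=0$, $\varphi=0$, and $T$ is injective, hence invertible.

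Finally, from $T=T_1+T_2$ and the invertibility just established, the resolvent identity
\[
T^{-1}-T_1^{-1}=T_1^{-1}(T_1-T)T^{-1}=-T_1^{-1}T_2T^{-1}
\]
yields (\ref{3.16}) with $T_{com}:=-T_1^{-1}T_2T^{-1}$, which is compact because $T_2$ is compact and $T_1^{-1},T^{-1}$ are bounded. I expect the injectivity step to be the main obstacle: one has to pass from the purely algebraic identity $T\varphi=0$ to a genuine (weak) solution of the transmission problem, which requires the $H^2_{{\rm loc}}$-regularity of the combined volume potential together with the matching of its Cauchy data across the interfaces, before the uniqueness theorem for (\ref{3.1}) can be invoked.
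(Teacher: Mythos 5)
Your proposal is correct and follows essentially the same route as the paper: the identical splitting $T=T_1+T_2$ into the invertible multiplication part and the compact matrix of volume potentials, the Fredholm alternative, and the reduction of injectivity to the uniqueness of the homogeneous problem (\ref{3.1}) via the combined volume potential, followed by the same resolvent identity for $T_{com}$. The only cosmetic difference is that you read off $\varphi_j=k^2(n_j-1)v|_{D_j}$ directly from the componentwise form of $T\varphi=0$, whereas the paper recovers $\varphi_j$ from $\tr v+k^2v=-\varphi_j$ in $D_j$ after concluding $v\equiv 0$; both yield $\varphi=0$.
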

\begin{proof}
It is easily checked that the operator $T$ defined by (\ref{3.70}) can be divided into two parts
\ben
T&=&\left(\begin{array}{cc}
                  q_1{\color{hw}I_{D_1}}&0 \\
                 0 & q_2{\color{hw}I_{D_2}} \\
                   \end{array}\right)
-\left(\begin{array}{cc}
                \textbf{V}_1^{(1)} & \textbf{V}_2^{(1)} \\
                \textbf{V}_1^{(2)}& \textbf{V}_2^{(2)} \\
                  \end{array}\right)\\
&=:&T_1+T_2.
\enn
Clearly, $T_1$ is invertible on $Y$ and $T_2$ is compact on $Y$. This yields that the operator $T$ is of Fredholm-type with index $0$. Now let $T\varphi=0$ for $\varphi=(\varphi_1, \varphi_2)^{\color{hw}T}\in Y$. We define the function
\be\label{3.10}
w(x)
=\int_{D_1}\Phi(x,y)\varphi_1(y)dy+\int_{D_2}\Phi(x,y)\varphi_2(y)dy
\quad\text{for\;}x\in\R^3.
\en
{\color{hw}Then it follows from the properties of the operator $V_j,j=1,2$ (see e.g. \cite[Section 8.2]{CK2013}) that} $w$ is a solution of the problem (\ref{3.1}) with the data $f_1=f_2=0$. {\color{hw}Thus}, the uniqueness of (\ref{3.1}) ensures that $w=0$ in $\R^3$. So, {\color{hw}using the properties of the operator $V_j$ $(j=1,2)$ again,} we derive that
{\color{hw}$\tr w+k^2w=-\varphi_1$ in $D_1$ and $\tr w+k^2w=-\varphi_2$ in $D_2$ which yields} that $\varphi_1=\varphi_2=0$. {\color{hw}Therefore}, the invertibility of $T$ follows from the Fredholm alternative. {\color{hw}Finally}, by a direct calculation
{\color{hw} and the compactness of $T_2$}, one can obtain the assertion (\ref{3.16}) and {\color{hw} the compactness of $T_{com}$}. This ends the proof of the lemma.
\end{proof}

{\color{hw}Now, we present the factorization of the far-field operator $F$.}

\begin{theorem}\label{thm3.2}
Let the far-field operator $F$ be defined by (\ref{3.4}). Then we have the following factorization
\be\label{3.7}
F=H^*T^{-1}H,
\en
where $H^*$ is the adjoint operator of the incident operator $H$.
\end{theorem}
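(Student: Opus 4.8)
The plan is to verify the identity $F = H^*T^{-1}H$ by comparing the action of both sides on an arbitrary density $g \in L^2(\mathbb{S}^2)$, using the already-established relations $F = GH$ (superposition) and the Lippmann--Schwinger representation of the scattered field. First I would recall that for the incident field $u^i = H_1 g = H_2 g$ (the two components of $Hg$ are just the restriction of the same Herglotz wave to $D_1$ and $D_2$), the total field $u = u^i + u^s$ of $(\ref{1.1})$--$(\ref{1.2})$ solves the Lippmann--Schwinger equation
\be\label{plan-ls}
u(x) = u^i(x) - k^2\int_D \Phi(x,y)(n(y)-1)u(y)\,dy, \qquad x\in\R^3,
\en
so that, setting $\varphi_j := k^2(n_j-1)u|_{D_j}\in L^2(D_j)$ and $\varphi=(\varphi_1,\varphi_2)^T\in Y$, the scattered field is exactly $w(x) = -\big(\mathbf V_1\varphi_1 + \mathbf V_2\varphi_2\big)(x)$ in the notation of $(\ref{3.8})$, up to the sign convention fixed in Lemma~\ref{le3.12}. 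Rewriting $(\ref{plan-ls})$ restricted to $D_1$ and $D_2$ and dividing by $k^2(n_j-1)$ (legitimate since $|\Rt[n_j]-1|\ge c$) gives precisely $T\varphi = Hg$, hence $\varphi = T^{-1}Hg$; this is where the invertibility of $T$ from Lemma~\ref{le3.12} is used.

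Next I would identify the far-field pattern. Since $w = u^s$ has the integral representation $w(x) = -\int_D \Phi(x,y)\varphi(y)\,dy$ with $\varphi$ as above, its far-field pattern is obtained from the asymptotics of $\Phi(x,y)$, and by the definition of $G$ in Lemma~\ref{le3.1} we have $Fg = w_\infty = G\varphi = GT^{-1}Hg$. So it remains to show $G = H^*$, i.e.\ that the solution operator $G:Y\to L^2(\mathbb S^2)$ coincides with the adjoint of the incident operator $H:L^2(\mathbb S^2)\to Y$. This is exactly the content of the computation already carried out inside the proof of Lemma~\ref{le3.1}: the pairing identity $(\ref{3.2})$ together with Green's theorem was used there to compute $G^*$, and reading that computation with $\widetilde w$ replaced by the Herglotz wave $Hg$ itself (rather than its adjoint) yields $\langle G(f_1,f_2)^T, g\rangle_{L^2(\mathbb S^2)} = \int_{D_1} f_1\,\overline{(H_1 g)}\,dx + \int_{D_2} f_2\,\overline{(H_2 g)}\,dx$, which is precisely $\langle (f_1,f_2)^T, Hg\rangle_Y$, i.e.\ $G = H^*$. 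Combining, $F = H^* T^{-1} H$.

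The step I expect to be the main obstacle — or at least the one requiring genuine care rather than bookkeeping — is the clean derivation of the operator equation $T\varphi = Hg$ from the Lippmann--Schwinger equation, keeping the sign conventions and the placement of the factors $q_j = 1/(k^2(n_j-1))$ consistent with the definition $(\ref{3.70})$ of $T$: one must restrict $(\ref{plan-ls})$ to each $D_j$, move the self-interaction term $\mathbf V_j^{(j)}$ to the left, recognize the cross term $\mathbf V_{3-j}^{(j)}$ as the off-diagonal block, and divide through by $k^2(n_j-1)$ so that the diagonal becomes $q_j I_{D_j} - \mathbf V_j^{(j)}$. The remaining ingredients — the mapping properties of the volume potentials $\mathbf V_j$ (e.g.\ \cite[Section 8.2]{CK2013}) and the far-field asymptotics of $\Phi$ — are standard, and the identification $G = H^*$ is, as noted, essentially already done in Lemma~\ref{le3.1}.
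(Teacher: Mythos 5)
Your overall route---derive the density equation $T\varphi=Hg$ from the Lippmann--Schwinger equation and then read off the far field of the induced volume potential---is legitimate, and it is essentially the ``forward'' version of the paper's argument: the paper instead starts from $H^*\varphi$, identifies it as the far-field pattern of the potential $w$ in (\ref{3.10}), shows that this $w$ solves (\ref{3.1}) with data $T\varphi$, and concludes $H^*=GT$, hence $G=H^*T^{-1}$ and $F=GH=H^*T^{-1}H$. The first half of your proposal is sound up to a sign: the Lippmann--Schwinger equation reads $u=u^i-k^2\int_D\Phi(\cdot,y)(1-n(y))u(y)\,dy$, so with $\varphi_j:=k^2(n_j-1)u|_{D_j}$ one gets $u^s=+(\mathbf{V}_1\varphi_1+\mathbf{V}_2\varphi_2)$ and $q_j\varphi_j-\mathbf{V}_1^{(j)}\varphi_1-\mathbf{V}_2^{(j)}\varphi_2=H_jg$ in $D_j$, which matches $T$ in (\ref{3.70}); with the sign you wrote, the diagonal would come out as $q_jI_{D_j}+\mathbf{V}_j^{(j)}$ and would \emph{not} match $T$, so this is more than bookkeeping.

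The genuine gap is in the second half. The identity $G=H^*$ that you say ``remains to be shown'' is false: combined with $F=GH$ it would give $F=H^*H$, a self-adjoint nonnegative operator, which the far-field operator is not. The pairing computation inside Lemma \ref{le3.1} does not yield $\langle G(f_1,f_2)^T,g\rangle=\langle (f_1,f_2)^T,Hg\rangle_Y$; applying Green's theorem to the right-hand side of (\ref{3.2}) necessarily brings in the \emph{total} field $\wid{w}$ of the auxiliary scattering problem together with the weight $k^2(n_j-1)$ (only the pairing of the two radiating fields on $\pa D$ vanishes, so the scattered part cannot be dropped from $\wid{w}$), and the outcome is exactly (\ref{3.3}), i.e.\ $H^*=GT$, not $G=H^*$. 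Likewise $w_\infty\neq G\varphi$: the operator $G$ acts on the source data $(f_1,f_2)^T$ of problem (\ref{3.1}), not on the induced density, so the correct statement is $w_\infty=G(T\varphi)$. Your two misidentifications happen to cancel in the final formula, but as written the argument is not a proof. The repair is short and stays within your own framework: from the asymptotics of $\Phi$ (which you already invoke) the far field of $\mathbf{V}_1\varphi_1+\mathbf{V}_2\varphi_2$ equals $\int_{D_1}e^{-ikd\cdot y}\varphi_1(y)\,dy+\int_{D_2}e^{-ikd\cdot y}\varphi_2(y)\,dy=(H^*\varphi)(d)$ directly, so $Fg=(u^s)_\infty=H^*\varphi=H^*T^{-1}Hg$ without ever introducing $G$.
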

\begin{proof}
It can be easily proved that the adjoint $H^*$ of $H$ satisfies:
{\color{hw} for $\varphi=(\varphi_1, \varphi_2)^{T}\in Y$,}
\ben
(H^*\varphi)(d)
=\int_{D_1}e^{-ikd\cdot y}\varphi_1(y)dy+\int_{D_2}e^{-ikd\cdot y}\varphi_2(y)dy\quad
\text{for\;}d\in \mathbb{S}^2,
\enn
which is the far-field pattern of the function $w$ defined by (\ref{3.10}).
Then we derive that $w$ solves the problem (\ref{3.1}) with the data
\be\label{3.11}
f_1:=q_1\varphi_1-[\textbf{V}_1^{(1)}\varphi_1+\textbf{V}_2^{(1)}\varphi_2],\qquad
f_2:=q_2\varphi_2-[\textbf{V}_1^{(2)}\varphi_1+\textbf{V}_2^{(2)}\varphi_2].
\en
So that $H^*=GT$, which together with Lemma \ref{le3.12} yields $G=H^*T^{-1}$. This combines with the fact $F=GH$ leads to the factorization that $F=H^*T^{-1}H$, which completes the proof of the Theorem.
\end{proof}
We now introduce an auxiliary operator $\wid{H}_1:L^2(\mathbb{S}^2)\mapsto H^{\frac{1}{2}}(\pa D_1)$ defined by
\be\label{3.12}
(\wid{H}_1\varphi)(x) = \int_{\mathbb{S}^2}e^{ikx\cdot d}\varphi(d)ds(d),\qquad \text{for\;} x\in\pa D_1
\en
and the compact operator
$L: H^{\frac{1}{2}}(\pa D_1)\mapsto L^2(D_1)$
with $Lh=w|_{D_1}$, where $w$ is a solution of {\color{hw} the problem}
\be\label{3.120}
\left\{
  \begin{array}{ll}
    \tr w + k^2w = 0, & \text{in}\;D_1,\\
    w= h, & \text{on}\; \pa D_1,\\
  \end{array}
\right.
\en
for $h\in H^{\frac{1}{2}}(\pa D_1)$.
In the following, we always assume that $k^2$ is not a Dirichlet eigenvalue of $-\tr$ in $D_1$. So, the above problem (\ref{3.120}) is well posed and consequently, the operator $L$ is well defined.

It is noted that $L\widetilde{H}_1=H_1$ and thus
\be\label{3.13}
H=\left(\begin{array}{cc}
                  L&0 \\
                 0 & I_{D_2} \\
                   \end{array}\right)
\left(
  \begin{array}{c}
    \widetilde{H}_1 \\
    H_2 \\
  \end{array}
\right):=\emph{\textbf{A}}\wid{H}.
\en
Based on (\ref{3.13}), we define a series of perturbation operators $F_m$ by
\be\label{2.140}
F_m:=F+\rho_m\wid{H}_1^*N_{i,\pa D_1}\wid{H}_1
\en
with $\rho_m>0$ for every $m\in\N$, which satisfies that $\rho_m\to 0$ as $m\to\ify$. Here, $N_{i,\pa D_1}$ is defined by
\ben
&&(N_{i,\pa D_1}\varphi_1)(x)=\frac{\pa}{\pa\nu(x)}\int_{\pa D_1}\frac{\pa\Phi(i;x,y)}{\pa\nu(y)}\varphi_1(y)ds(y),
                           \qquad \text{for\;}x\in \pa D_1,
\enn
where $\Phi(i;x,y)$ is a fundamental solution of the special Helmholtz equation $\tr u-u=0$.
Notice that $\widetilde{H}_1$ is well-defined, it then follows
\be\label{3.14}
\|F_m-F\|_{L^2(\mathbb{S}^2)} = \|\rho_m\wid{H}_1^*N_{i,\pa D_1}\wid{H}_1\|_{L^2(\mathbb{S}^2)}
{\color{hw}= \rho_m\|\wid{H}_1^*N_{i,\pa D_1}\wid{H}_1\|_{L^2(\mathbb{S}^2)}}\to 0,\;\text{as\;}m\to\ify.
\en
We define an auxiliary matrix $J_m$ by
\ben
J_m:=\left(\begin{array}{cc}
                  \rho_m N_{i,\pa D_1}&0 \\
                 0 & 0 \\
                   \end{array}\right).
\enn
This, together with (\ref{3.16}) leads to the factorization for the perturbation far-field operator $F_m$ as
\be\no
F_m
&=&\wid{H}^*\left(\emph{\textbf{A}}^*T^{-1}\emph{\textbf{A}}+J_m\right)\wid{H}\\\no
&=&\wid{H}^*\left[\left(\begin{array}{cc}
                  \rho_m N_{i,\pa D_1} & 0 \\
                  0 & \frac{1}{q_2}{\color{hw}I_{D_2}} \\
                   \end{array}\right)
+\left[\left(\begin{array}{cc}
                  L^*{\color{hw}(\frac{1}{q_1}I_{D_1})}L & 0 \\
                  0 & 0 \\
                   \end{array}\right)
             +\emph{\textbf{A}}^*T_{com}\emph{\textbf{A}}
\right]\right]\wid{H}\\\label{3.17}
&=:&\wid{H}^*\left(M_m+M_{com}\right)\wid{H}.
\en
It is obvious that $M_{com}$ is compact on $\wid{Y}:=H^{\frac{1}{2}}(\pa D_1)\times L^2(D_2)$ and {\color{hw}$-\Rt M_m$} is coercive on $\wid{Y}$, {\color{hw}i.e., there exists
$c_0>0$ with $-\langle\Rt M_m\varphi,\varphi\rangle\geq c_0\|\varphi\|^2$ for all $\varphi\in\wid{Y}$}
because $\Rt(n_2)-1<-c$ in $D_2$ for some positive constant $c$ and the operator {\color{hw}$-N_{i,\pa D_1}$} is coercive {\color{hw}(see e.g. \cite[Theorem 1.26]{K2008})}.

For $z\in \R^3$,  define the function $\phi_z(\wi{x})=e^{-ik\wi{x}\cdot z}$ with $\wi{x}\in \mathbb{S}^2$.
Then we next prove the fact that $z\in D\quad\Longleftrightarrow\quad \phi_z\in R(\wid{H}^*)$. {\color{hw}To this end,} we first need to show the following lemma.
\begin{lemma}\label{le3.2}
For $z\in \R^3$,  we have that
\ben
z\in D\quad\Longleftrightarrow\quad \phi_z\in  R(G).
\enn
\end{lemma}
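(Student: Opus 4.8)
The plan is to prove the equivalence $z\in D\iff\phi_z\in R(G)$ by the now-standard argument used for the classical factorization method, adapted to the solution operator $G$ of the transmission problem (\ref{3.1}). Recall that $G(f_1,f_2)^T=w_\infty$, the far-field pattern of the solution $w$ of (\ref{3.1}). The key observation is that $\phi_z$ is the far-field pattern of the point source $\Phi(\cdot,z)$, so the question is whether one can realize $\Phi(\cdot,z)$ (outside $D$) as the scattered part of a solution of the transmission problem driven by some $L^2$ data $(f_1,f_2)$.

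First I would treat the direction $z\in D\Rightarrow\phi_z\in R(G)$. Fix $z\in D$, say $z\in D_1$ (the case $z\in D_2$ is symmetric). Choose a small ball $B$ around $z$ with $\overline B\subset D_1$, and let $\chi$ be a smooth cutoff equal to $1$ outside a slightly larger ball and $0$ near $z$. Set $w:=\chi\,\Phi(\cdot,z)$, so that $w=\Phi(\cdot,z)$ outside $D_1$ (hence it is radiating, satisfies $\tr w+k^2w=0$ in $\R^3\se\ov D$, and is $C^1$ across $\pa D_1\cup\pa D_2$ by construction), while inside $D_1$ it is a smooth function whose singularity has been smoothed out. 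Plugging $w$ into the equations of (\ref{3.1}) and solving for the right-hand sides defines $f_1:=\frac{1}{k^2(1-n_1)}\big(\tr w+k^2n_1 w\big)\in L^2(D_1)$ (using $|\Rt[n_1]-1|\geq c$ to divide) and $f_2:=0\in L^2(D_2)$; since $w$ is smooth in $D_1$ this $f_1$ is genuinely in $L^2$. By uniqueness of (\ref{3.1}) this $w$ is \emph{the} solution with data $(f_1,f_2)$, and its far-field pattern is that of $\Phi(\cdot,z)$, namely $\phi_z$. Hence $\phi_z\in R(G)$.

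For the converse, suppose $z\notin D$ and $\phi_z\in R(G)$, say $\phi_z=G(f_1,f_2)^T=w_\infty$ with $w$ the corresponding solution of (\ref{3.1}). In $\R^3\se\ov D$ both $w$ and $\Phi(\cdot,z)$ solve $\tr v+k^2v=0$ and are radiating, and they share the same far-field pattern; by Rellich's lemma and unique continuation $w=\Phi(\cdot,z)$ in the unbounded component of $\R^3\se(\ov D\cup\{z\})$. If $z$ lies outside $\ov D$ this forces $w$ to be singular at $z\in\R^3\se\ov D$, contradicting that $w$ solves the homogeneous Helmholtz equation there. If $z\in\pa D$ one gets a contradiction with the interior regularity of $w$ near $\pa D$ (continuity of $w$ and $\pa_\nu w$ across $\pa D_1\cup\pa D_2$), since $\Phi(\cdot,z)$ fails to be $H^1$ near $z$. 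Either way $z\notin D$ is incompatible with $\phi_z\in R(G)$, which gives the contrapositive.

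I expect the main obstacle to be the careful handling of the cutoff construction in the forward direction: one must make sure that after smoothing the singularity the resulting source terms $f_j$ genuinely lie in $L^2(D_j)$ and that the function $w$ so defined is exactly the (unique) solution of (\ref{3.1}) with that data, in particular that the transmission conditions on $\pa D_1\cup\pa D_2$ and the radiation condition hold — this is where the geometric separation $\overline{D}=\overline{D}_1\cup\overline{D}_2$ with disjoint pieces, and the hypothesis $|\Rt[n_j]-1|\geq c$ guaranteeing invertibility of $k^2(1-n_j)$, are used. The converse direction is comparatively routine, relying only on Rellich's lemma, unique continuation, and the interior regularity already invoked in the proof of Lemma \ref{le3.1}.
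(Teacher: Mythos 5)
Your overall strategy coincides with the paper's: in the forward direction you smooth out the point source with a cutoff and read the data $(f_1,f_2)$ off from the equations, and in the converse direction you combine Rellich's lemma and unique continuation with the failure of $\Phi(\cdot,z)$ to belong to $H^1$ near $z$. The converse direction is fine as written and matches the paper's argument.

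There is, however, a concrete error in the forward direction: the choice $f_2=0$ is wrong. With $z\in D_1$ and $w=\chi\,\Phi(\cdot,z)$, you have $w=\Phi(\cdot,z)$ on all of $D_2$, and there $\tr w+k^2n_2w=k^2(n_2-1)\Phi(\cdot,z)$, which is not zero because $|\Rt[n_2]-1|\geq c$. Hence $w$ is \emph{not} a solution of (\ref{3.1}) with data $(f_1,0)$, and your appeal to uniqueness to identify $w$ as ``the'' solution with that data breaks down. The repair is immediate and is exactly what the paper does: define $f_j:=\frac{1}{k^2(1-n_j)}\bigl(\tr w+k^2n_jw\bigr)$ in \emph{both} subdomains, which yields $f_2=-\Phi(\cdot,z)|_{D_2}\in L^2(D_2)$ (and, more generally, $f_j=-\Phi(\cdot,z)$ wherever the cutoff equals $1$, so the source terms are supported on all of $D$, not just near $z$). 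With this corrected $f_2$ the remainder of your argument, and the lemma, go through.
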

\begin{proof}
Assume first $z\in D$ and thus there exists a closed ball $B_\delta(z)$ centered at $z$ with
radius $\delta>0$ such that $B_\delta(z)\subset D$. Then we choose a cut-off function
$\chi\in C^\ify(\R^3)$ with $\chi(t)=1$ for $|t|\geq\delta$ and $\chi(t)=0$ for
$|t|\leq\delta/2$ and define a function $w(x)$ by
\ben
w(x)
:=\chi(|x-z|)\Phi(x,z)
=\chi(|x-z|)\frac{e^{ik|x-z|}}{4\pi|x-z|}\qquad\text{in\;}\R^3.
\enn
Obviously, $w\in C^\ify(\R^3)$ and $w(x)=\Phi(x,z)$ for $|x-z|\geq\delta$. Indeed, for $x\in D_j$ $(j=1, 2)$, we have
\ben
\tr w+k^2n_jw
= \Phi\tr\chi+\chi\tr\Phi+2\nabla\chi\cdot\nabla\Phi+k^2n_j\chi\Phi
=: {\color{hw}k^2(1-n_j)}g_j\quad\text{in\;}D_j.
\enn
Then $g_j\in L^2(D_j)$, this combines with the unique solvability of the problem (\ref{3.1}) implies that
$w$ is the solution of (\ref{3.1}) with the data $(f_1,f_2)^T=(g_1,g_2)^T$.
So we immediately get $G(g_1,g_2)^T=w_\ify=\phi_z$ and consequently  $\phi_z\in R(G)$.

On the other hand, let $z\notin D$ and assume that there exists $(\wid{f}_1,\wid{f}_2)^T\in Y$
such that $G(\wid{f}_1,\wid{f}_2)^T=\phi_z$. Let $\wid{w}$ be the solution of the problem (\ref{3.1}) with
the data $(f_1,f_2)^T=(\wid{f}_1,\wid{f}_2)^T$
and $\wid{w}_{\ify}$ be the far-field pattern of $\wid{w}$. Then $\wid{w}_{\ify}=\phi_z$.
{\color{hw}It follows from Rellich's Lemma and unique continuation theorem that} $\wid{w}(x)=\Phi(x,z)$ in $\R^3\se(\ov{D}\cup\{z\})$. However, this is a contradiction
because $\|\wid{w}\|_{H^1(B_\delta(z))}<\ify$ and $\|\Phi(\cdot,z)\|_{H^1(B_\delta(z))}=\ify$, {\color{hw}where $B_\delta(z)$ is chosen
to be a sufficiently small ball centered at $z$}. The proof of this lemma is thus completed.
\end{proof}
It is noted that, from the proof of Theorem \ref{thm3.2}, the solution operator $G$ and the incident operator $H$ satisfy $H^*=GT$ and $G=H^*T^{-1}$. This implies that $R(H^*)=R(G)$, thus we have the following lemma. The proof is easily obtained, hence we omit it.
\begin{lemma}\label{le3.4}
It holds that
\ben
z\in D\quad\Longleftrightarrow\quad \phi_z\in  R(H^*).
\enn
\end{lemma}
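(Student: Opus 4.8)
The plan is to reduce the claim directly to Lemma \ref{le3.2} by exploiting the two operator identities recorded in the proof of Theorem \ref{thm3.2}. From that proof we have $H^{*}=GT$, and since $T$ is invertible on $Y$ by Lemma \ref{le3.12}, we also have $G=H^{*}T^{-1}$. The whole argument then amounts to turning these two factorizations into an equality of ranges and pairing it with Lemma \ref{le3.2}.

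First I would derive the inclusion $R(H^{*})\subseteq R(G)$: for every $\varphi$ in the domain, $H^{*}\varphi = G(T\varphi)\in R(G)$. Conversely, using $G=H^{*}T^{-1}$, for every $\psi$ we get $G\psi = H^{*}(T^{-1}\psi)\in R(H^{*})$, so $R(G)\subseteq R(H^{*})$. Combining the two inclusions gives $R(H^{*})=R(G)$. Then I would invoke Lemma \ref{le3.2}, which asserts $z\in D\Longleftrightarrow \phi_z\in R(G)$, and substitute $R(G)=R(H^{*})$ to obtain $z\in D\Longleftrightarrow \phi_z\in R(H^{*})$, which is exactly the statement of Lemma \ref{le3.4}.

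There is essentially no obstacle here, since all the analytic content — the explicit singular solution built from $\Phi(\cdot,z)$ and a cut-off when $z\in D$, and the Rellich/unique-continuation contradiction when $z\notin D$ — is already packaged in Lemma \ref{le3.2}. The only point that genuinely matters is that the equality of ranges uses the \emph{bijectivity} of $T$ on $Y$, not merely its boundedness; this is precisely what Lemma \ref{le3.12} supplies, so the proof is short and we may omit the routine details.
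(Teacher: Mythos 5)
Your argument is correct and is exactly the one the paper uses: the identities $H^{*}=GT$ and $G=H^{*}T^{-1}$ from the proof of Theorem \ref{thm3.2} (with the invertibility of $T$ from Lemma \ref{le3.12}) give $R(H^{*})=R(G)$, and the conclusion then follows from Lemma \ref{le3.2}. The paper states precisely this reduction in the sentence preceding the lemma and omits the routine details, just as you do.
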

Combining the above lemmas yields the following theorem.
\begin{theorem}\label{thm3.5}
$\wid{H}^*$ is compact with dense range in $L^2(\mathbb{S}^2)$ and
\ben
z\in D\quad\Longleftrightarrow\quad \phi_z\in R(\wid{H}^*).
\enn
\end{theorem}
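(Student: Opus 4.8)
The statement has three components: that $\wid H^*$ is compact, that $R(\wid H^*)$ is dense in $L^2(\mathbb S^2)$, and the range characterization $z\in D\Leftrightarrow\phi_z\in R(\wid H^*)$. The plan is to obtain the first two components, together with the easy half $z\in D\Rightarrow\phi_z\in R(\wid H^*)$, almost for free from the splitting $H=\emph{\textbf{A}}\wid H$ and Lemmas \ref{le3.1} and \ref{le3.4}, and to reserve the actual work for the reverse implication $z\notin D\Rightarrow\phi_z\notin R(\wid H^*)$, which I would prove directly by a potential-theoretic argument modelled on the proof of Lemma \ref{le3.2}.

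For compactness: $\wid H_1$ has the analytic kernel $e^{ikx\cdot d}$ and therefore maps $L^2(\mathbb S^2)$ compactly into $H^{1/2}(\pa D_1)$ (its range consists of traces of Herglotz wave functions, which lie in $H^{3/2}(\pa D_1)$, and the embedding $H^{3/2}(\pa D_1)\hookrightarrow H^{1/2}(\pa D_1)$ is compact), while $H_2$ is compact by the standard argument for Herglotz operators; hence $\wid H=(\wid H_1,H_2)^T$, and with it $\wid H^*$, is compact. For denseness: from $H=\emph{\textbf{A}}\wid H$ we get $H^*=\wid H^*\emph{\textbf{A}}^*$, so $R(H^*)\subseteq R(\wid H^*)$; since $H^*=GT$ and $G=H^*T^{-1}$ (from the proof of Theorem \ref{thm3.2}) give $R(H^*)=R(G)$, which is dense in $L^2(\mathbb S^2)$ by Lemma \ref{le3.1}, the range of $\wid H^*$ is dense as well. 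The implication $z\in D\Rightarrow\phi_z\in R(\wid H^*)$ then follows at once, for $\phi_z\in R(H^*)\subseteq R(\wid H^*)$ by Lemma \ref{le3.4}.

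It remains to show that $z\notin D$ forces $\phi_z\notin R(\wid H^*)$. I would first compute, exactly as in the proof of Theorem \ref{thm3.2}, that for $(\psi_1,\psi_2)^T$ in the domain of $\wid H^*$,
\ben
(\wid H^*(\psi_1,\psi_2)^T)(d)=\int_{\pa D_1}e^{-ikd\cdot y}\psi_1(y)\,ds(y)+\int_{D_2}e^{-ikd\cdot y}\psi_2(y)\,dy,\qquad d\in\mathbb S^2,
\enn
so that $\wid H^*(\psi_1,\psi_2)^T$ is the far-field pattern of
\ben
W(x):=\int_{\pa D_1}\Phi(x,y)\psi_1(y)\,ds(y)+\int_{D_2}\Phi(x,y)\psi_2(y)\,dy,\qquad x\in\R^3,
\enn
a single-layer potential over $\pa D_1$ plus a volume potential over $D_2$. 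Now suppose $\phi_z=\wid H^*(\psi_1,\psi_2)^T$ for some $z\notin D$. Then $W$ is a radiating solution of the Helmholtz equation in $\R^3\se\ov D$ whose far-field pattern equals that of $\Phi(\cdot,z)$, so Rellich's lemma together with the unique continuation principle gives $W=\Phi(\cdot,z)$ in the unbounded component of $\R^3\se(\ov D\cup\{z\})$. On the other hand $W\in H^1_{loc}(\R^3)$ — the volume potential lies in $H^2_{loc}(\R^3)$ and the single-layer potential lies in $H^1_{loc}(\R^3)$, only its normal derivative jumping across $\pa D_1$, which is harmless here — so, choosing a sufficiently small ball $B_\de(z)$, we would get $\|\Phi(\cdot,z)\|_{H^1(B_\de(z))}=\|W\|_{H^1(B_\de(z))}<\ify$, contradicting $\|\Phi(\cdot,z)\|_{H^1(B_\de(z))}=\ify$. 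Hence $\phi_z\notin R(\wid H^*)$, and combined with the previous paragraph this proves the theorem.

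The crux is this last step, and in particular two points within it. First, one must recognize that, unlike $G$ and $H^*$, the adjoint $\wid H^*$ is the far-field map of a potential carrying a single-layer piece on $\pa D_1$ (so that $R(\wid H^*)$ is in fact strictly larger than $R(H^*)=R(G)$, and the characterization cannot simply be inherited from Lemma \ref{le3.2}). Second, one must observe that although this single-layer piece is not $C^1$ across $\pa D_1$, it is still of class $H^1_{loc}(\R^3)$, which is precisely the regularity needed to run the Rellich/singularity comparison against $\Phi(\cdot,z)$; the remaining ingredients — Rellich's lemma, unique continuation, the blow-up of $\|\Phi(\cdot,z)\|_{H^1}$ near $z$, and the choice of $B_\de(z)$ inside the unbounded component — are exactly as in the proof of Lemma \ref{le3.2}.
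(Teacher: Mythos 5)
Your proof is correct and follows essentially the same route as the paper's: the easy parts (compactness, denseness, and $z\in D\Rightarrow\phi_z\in R(\wid{H}^*)$) are obtained from $H^*=\wid{H}^*\emph{\textbf{A}}^*$ together with Lemmas \ref{le3.1} and \ref{le3.4}, and the reverse implication is the same contradiction argument via Rellich's lemma and unique continuation, comparing the $H^1(B_\de(z))$-regularity of the single-layer-plus-volume potential with the non-$H^1$ singularity of $\Phi(\cdot,z)$, exactly as in the paper's (\ref{3.18}). The only cosmetic difference is that the paper deduces the denseness of $R(\wid{H}^*)$ from the injectivity of $\wid{H}$ rather than from the inclusion $R(H^*)\subseteq R(\wid{H}^*)$; both are valid.
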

\begin{proof}
{\color{hw}From \cite[Theorem 3.19]{CK2013}, we can easily obtain that $\wid{H}$ is injective.}
Then the compactness and denseness of $\wid{H}^*$ easily follow from the compactness and injectivity
of $\wid{H}$. Assume $z\in D$, it is seen from Lemma \ref{le3.4} that $\phi_z\in R(H^*)$. {\color{hw}Let ${Y}'$ denote the {\color{hw} adjoint} of ${Y}$.}
So there exists $\varphi\in Y'$ such that $\phi_z=H^*\varphi$. Notice that $H^*=\wid{H}^*A^*$ from (\ref{3.13}) and thus $\phi_z=\wid{H}^*(\emph{\textbf{A}}^*\varphi)$, which gives
$\phi_z\in R(\wid{H}^*)$.

{\color{hw}On the other hand, let $z\notin D$ and assume on the contrary that there exists $\varphi=(\varphi_1,\varphi_2)^T\in \wid{Y}'$
such that $\wid{H}^*\varphi=\phi_z$.} Here, $\wid{Y}'$ denotes the {\color{hw} adjoint} of $\wid{Y}$. Then {\color{hw}from Rellich's Lemma and unique continuation theorem}, it can be obtained that
\be\label{3.18}
\int_{\pa D_1}\Phi(\cdot,y){\color{hw}\varphi_1(y)}dy+\int_{D_2}\Phi(\cdot,y){\color{hw}\varphi_2(y)}dy=\Phi(\cdot,z)
\quad \text{in\;}\R^3\se(\ov{D}\cup\{z\}).
\en
However, this is a contradiction because the left-hand of (\ref{3.18}) belongs to {\color{hw}$H^1(B_\delta(z))$} but the right-hand {\color{hw} of (\ref{3.18})} does not belong to {\color{hw}$H^1(B_\delta(z))$}, {\color{hw}where $B_\delta(z)$ is chosen
to be a sufficiently small ball centered at $z$}. This proves the theorem.
\end{proof}

{\color{hw}To proceed further we need to introduce the following interior transmission eigenvalue.}

\begin{definition}${}^{\cite[Definition 4.7]{K2008}}$\label{def3.6}
$k^2$ is called an interior transmission eigenvalue if there exists $(u,w)\in H_0^1(D)\times L^2(D)$ with $(u,w)\neq(0,0)$ and a sequence ${w_j}\in H^2(D)$ with $w_j\rightarrow w$ in $L^2(D)$ and $\tr w_j+k^2w_j=0$ in $D$ {\color{hw} and $(u,w)$} satisfies
\be\label{3.19}
\int_{D}\left(\nabla u\cdot\nabla\varphi-k^2 {\color{hw}n}u\varphi\right)dx
=k^2\int_{D}{\color{hw}(n-1)}w\varphi dx \quad\text{for\;all\;}\varphi\in H^1(D).
\en
\end{definition}

It should be remarked that the eigenvalue problem (\ref{3.19}) has been studied by Kirsch
in \cite{K2008} {\color{hw} when $n$ is real-valued
and  $n(x)>1$ or $n(x)<1$ in $D$, where it was proved that} (\ref{3.19}) has at most a
countable number of eigenvalues $k^2>0$. In the current paper, we always assume that $k^2>0$ is not an interior transmission eigenvalue {\color{hw}under} the case when $\Rt[n_1(x)]-1\geq c$ in $D_1$ and $\Rt[n_2(x)]-1<-c$ in $D_2$ for some positive
constant $c$.

In order to show the main theorem of the factorization method for our perturbation far-field pattern $F_m$ {\color{hw}that derived in (\ref{3.17})}, we need to prove the following theorem.

\begin{theorem}\label{thm3.7}
  Let $\wid{M}_m=M_m+M_{com}$ and assume that $k^2>0$ is neither an interior transmission eigenvalue in the sense of Definition \ref{def3.6} nor a Dirichlet eigenvalue of $-\tr$ in $D_1$.
Then\\
(i) $\Rt\wid{M}_m=\wid{M}_m^{(1)}+\wid{M}_m^{(2)}$,
where {\color{hw}$-\wid{M}_m^{(1)}$} is coercive and $\wid{M}_m^{(2)}$ is a compact operator;\\
(ii) $\I \wid{M}_m$ is strictly positive on $\ov{R(\wid{H})}$, i.e., $\I\langle\wid{M}_m\varphi,\varphi\rangle>0$ for all $\varphi\in\ov{R(\wid{H})}$ with $\varphi\not=0$.
\end{theorem}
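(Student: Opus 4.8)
The plan is to analyze the decomposition $\wid M_m = M_m + M_{com}$ from \eqref{3.17} piece by piece, using the structure coming from the factorization $F_m = \wid H^*(M_m+M_{com})\wid H$ and Lemma \ref{le3.12}. For part (i), recall from \eqref{3.17} that
\ben
M_m=\left(\begin{array}{cc}
                  \rho_m N_{i,\pa D_1} & 0 \\
                  0 & \frac{1}{q_2}I_{D_2} \\
                   \end{array}\right),\qquad
M_{com}=\left(\begin{array}{cc}
                  L^*(\frac{1}{q_1}I_{D_1})L & 0 \\
                  0 & 0 \\
                   \end{array}\right)
+\emph{\textbf{A}}^*T_{com}\emph{\textbf{A}}.
\enn
I would first take real parts. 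The term $\emph{\textbf{A}}^*T_{com}\emph{\textbf{A}}$ is compact since $T_{com}$ is compact by Lemma \ref{le3.12} and $\emph{\textbf{A}}$ is bounded, so its real part goes into $\wid M_m^{(2)}$. The term $L^*(\frac{1}{q_1}I_{D_1})L$ is compact because $L$ is compact (as noted right after \eqref{3.120}), so its real part is also absorbed into $\wid M_m^{(2)}$. The surviving "principal part" is $\Rt M_m$, and I would set
$$
-\wid M_m^{(1)} := -\Rt M_m = \left(\begin{array}{cc}
                  -\rho_m \Rt N_{i,\pa D_1} & 0 \\
                  0 & -\Rt\frac{1}{q_2}I_{D_2} \\
                   \end{array}\right).
$$
Coercivity of $-\wid M_m^{(1)}$ on $\wid Y = H^{1/2}(\pa D_1)\times L^2(D_2)$ then follows from two facts already recorded in the paper just after \eqref{3.17}: $-N_{i,\pa D_1}$ is coercive on $H^{1/2}(\pa D_1)$ (cf.\ \cite[Theorem 1.26]{K2008}), and $-\Rt(1/q_2) = -\Rt(k^2(n_2-1)) = k^2(1-\Rt n_2) > k^2 c > 0$ on $D_2$, so the block-diagonal operator is coercive with constant $c_0 := \min(\rho_m c_1, k^2 c)$ where $c_1$ is the coercivity constant of $-N_{i,\pa D_1}$. (One must keep track that $\rho_m>0$ is fixed for each $m$, so this $c_0$ depends on $m$, which is harmless.) This proves (i).

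For part (ii), I would argue that $\I\langle\wid M_m\varphi,\varphi\rangle>0$ for $0\neq\varphi\in\ov{R(\wid H)}$ by relating the quadratic form back to a physical scattering quantity. Writing $\varphi = (\varphi_1,\varphi_2)$, the contribution $\I\langle \rho_m N_{i,\pa D_1}\varphi_1,\varphi_1\rangle$ is controlled by the fact that $N_{i,\pa D_1}$ is associated with the modified Helmholtz equation $\tr u - u = 0$ (no radiation, so no energy loss), hence its quadratic form is real — it contributes nothing to the imaginary part; likewise $\I\langle \frac{1}{q_2}I_{D_2}\varphi_2,\varphi_2\rangle = \I(1/q_2)\|\varphi_2\|^2 = \I(k^2(n_2-1))\|\varphi_2\|^2 \ge 0$ since $\I n_2\ge 0$, and similarly for the block $L^*(\frac{1}{q_1}I_{D_1})L$ one gets $\I(1/q_1)\|L(\cdot)\|^2\ge 0$. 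The essential positivity comes from $\emph{\textbf{A}}^*T_{com}\emph{\textbf{A}}$, or more transparently, from unwinding the factorization: for $g\in L^2(\mathbb{S}^2)$ one has $\langle \wid M_m \wid H g,\wid H g\rangle$ related to $\langle F_m g, g\rangle$, and since $F_m = F + \rho_m\wid H_1^*N_{i,\pa D_1}\wid H_1$ with the perturbation term having real quadratic form, $\I\langle F_m g,g\rangle = \I\langle Fg,g\rangle$, which by the standard far-field energy identity (conservation of energy / optical theorem, with the absorbing term $\I n\ge 0$) equals $\|u^s_\infty\|^2/(\text{const}) + (\text{absorption terms})$, which is strictly positive unless $g$ produces the zero far field. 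The standard density argument — $R(\wid H)$ dense, $\wid M_m$ bounded — then upgrades this to all of $\ov{R(\wid H)}$, and on $\ov{R(\wid H)}$ a vanishing imaginary part would force, via Rellich and the injectivity of $\wid H$ (Theorem \ref{thm3.5} / \cite[Theorem 3.19]{CK2013}), the eigenfunction to generate a nontrivial interior transmission eigenpair at $k^2$ in the sense of Definition \ref{def3.6}, contradicting the standing assumption.

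The main obstacle will be step (ii): making the passage from "$\I\langle\wid M_m\varphi,\varphi\rangle = 0$" to "$k^2$ is an interior transmission eigenvalue" fully rigorous. This requires carefully identifying, for $\varphi\in\ov{R(\wid H)}$ with $\I\langle\wid M_m\varphi,\varphi\rangle=0$, the pair of functions $(u,w)$ — roughly, $w$ built from a Herglotz-type density and $u$ the associated transmitted field — and verifying that they satisfy the variational equation \eqref{3.19} together with the approximation property $w_j\to w$ in $L^2(D)$ with $\tr w_j+k^2 w_j=0$; the coupling across the two subdomains $D_1$ (handled at the boundary level via $\wid H_1$ and $L$) and $D_2$ (handled as a volume term) must be reconciled into a single transmission eigenproblem on $D=D_1\cup D_2$, and one must check the sign condition $\I n\ge 0$ only forces the absorption terms to vanish (not to be negative), so the argument genuinely uses $\I n_1,\I n_2\ge 0$ rather than $\I n = 0$. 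The coercivity direction (i) is essentially bookkeeping once the compact/coercive split of the blocks is set up, so I expect it to be routine.
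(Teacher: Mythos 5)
Part (i) of your proposal is exactly the paper's argument: set $\wid{M}_m^{(1)}=\Rt M_m$, absorb $\Rt M_{com}$ (compact because $T_{com}$ and $L$ are compact) into $\wid{M}_m^{(2)}$, and get coercivity of $-\Rt M_m$ from the coercivity of $-N_{i,\pa D_1}$ and from $-\Rt(1/q_2)=k^2(1-\Rt n_2)\geq k^2c$. Nothing to add there.

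For part (ii) your skeleton is the right one (a Green's-identity sign estimate, then the interior-transmission-eigenvalue assumption to exclude equality), but the decisive construction is only announced, not carried out, and your proposed shortcut does not quite close the argument. The optical-theorem route gives $\I\langle F_mg,g\rangle=\I\langle Fg,g\rangle\geq 0$ for $g\in L^2(\mathbb{S}^2)$, i.e.\ strict positivity on $R(\wid{H})$ modulo the ITE assumption; but passing to $\varphi\in\ov{R(\wid{H})}$ by density only preserves the non-strict inequality, and the blockwise inspection of $M_m$ and $M_{com}$ you begin with cannot rescue this, since $\I(\emph{\textbf{A}}^*T_{com}\emph{\textbf{A}})$ has no a priori sign. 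The paper instead transfers the whole quadratic form through the factorization: using the self-adjointness of $N_{i,\pa D_1}$ and $H^*=GT$ from Theorem \ref{thm3.2}, it shows $\I\langle\wid{M}_m\varphi,\varphi\rangle=-\I\langle T\psi,\psi\rangle$ with $\psi=T^{-1*}\emph{\textbf{A}}\varphi\in\ov{R(G^*)}$, and then (a) computes $\I\langle T\psi,\psi\rangle\leq-\frac{k}{(4\pi)^2}\|w_\ify\|^2_{L^2(\mathbb{S}^2)}$ directly for the volume potential $w$ with density $\psi$, valid for \emph{every} $\psi\in Y$, and (b) for $\psi^{(0)}\in\ov{R(G^*)}$ with vanishing imaginary part uses the explicit adjoint formula \eqref{3.3}, $G^*g_j=\bigl(k^2(\ov{n_l}-1)\ov{\wid{w}_j}|_{D_l}\bigr)_{l=1,2}$, to write $\ov{\psi^{(0)}}$ as a limit of $\wid{q}\wid{w}_j$ with $\wid{w}_j$ total fields of Herglotz incident waves; the Lippmann--Schwinger identity $\wid{w}_j-w_j=v_j$ then produces the approximating solutions $v_j$ of $\tr v_j+k^2v_j=0$ with $v_j\to v:=\wid{q}^{-1}\ov{\psi^{(0)}}-w^{(0)}$, and $(w^{(0)},v)$ is the eigenpair for \eqref{3.19}. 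The formula \eqref{3.3} for $G^*$ and the reduction to $T$ on $\ov{R(G^*)}$ are precisely the ingredients your sketch is missing; without them the "pair $(u,w)$ built from a Herglotz-type density" cannot be identified, so as written the proof of (ii) has a genuine gap rather than merely a deferred routine verification.
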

\begin{proof}
(i) The assertion (i) follows easily from the properties of
$M_m$ and $M_{com}$ (see the sentences under (\ref{3.17})).\\
(ii) Since $N_{i,\pa D_1}$ is self-adjoint, we obtain $\langle J_m\varphi, \varphi\rangle=0$ for all $\varphi\in\ov{R(\wid{H})}$ and
\ben
\I\langle\wid{M}_m\varphi,\varphi\rangle
&=&\I\langle \left(\emph{\textbf{A}}^*T^{-1}\emph{\textbf{A}}+J_m\right)\varphi,\varphi\rangle
=\I\langle T^{-1}\emph{\textbf{A}}\varphi,\emph{\textbf{A}}\varphi\rangle\\
&=&\I\langle \emph{\textbf{A}}\varphi,T^{-1*}\emph{\textbf{A}}\varphi\rangle
=\I\langle T^*(T^{-1*}\emph{\textbf{A}}\varphi),(T^{-1*}\emph{\textbf{A}}\varphi)\rangle\\
&=& {\color{hw}\I\langle (T^{-1*}\emph{\textbf{A}}\varphi),T(T^{-1*}\emph{\textbf{A}}\varphi)\rangle
=-\I\langle T(T^{-1*}\emph{\textbf{A}}\varphi),(T^{-1*}\emph{\textbf{A}}\varphi)\rangle}
\enn
for all $\varphi\in\ov{R(\wid{H})}$. The fact $\varphi\in \ov{R(\wid{H})}$ implies that there exists ${\color{hw}g_p}\in L^2(\mathbb{S}^2)$ {\color{hw}for $p\in\N$}
such that $\wid{H}{\color{hw}g_p}\to\varphi$ as $p\to\ify$. With the aid of $A\wid{H}=H$, we derive
$T^{-1*}H {\color{hw}g_p}\to T^{-1*}\emph{\textbf{A}}\varphi$ as ${\color{hw}p}\to\ify$.
{\color{hw}Then using $H^*=G T$ that obtained in the proof of Theorem \ref{thm3.2}, we have} $T^{-1*}H {\color{hw}g_p}= G^*{\color{hw}g_p}$. Therefore, the assertion (ii) is equivalent to
\ben
{\color{hw}\I\langle T\psi,\psi\rangle<0}\qquad \text{for\;all\;}\psi\in\ov{R(G^*)}\;\text{with\;}\psi\not=0.
\enn

Firstly, we prove that
\be\label{3.190}
\I\langle T\psi,\psi\rangle\leq0\qquad \text{for\;all\;}\psi\in\ov{R(G^*)}\;\text{with\;}\psi\not=0.
\en
Define two functions $w_1$ and $w_2$ by
\ben
&&w_1(x):=\int_{D_1}\Phi(x,y)\psi_1(y)dy,\qquad\text{for\;}x\in \R^3,\\
&&w_2(x):=\int_{D_2}\Phi(x,y)\psi_2(y)dy,\qquad\text{for\;}x\in \R^3
\enn
and $w:=w_1+w_2$ with $\psi=(\psi_1,\psi_2)^{{\color{hw}T}}$, we then obtain
\be\no
&&{\color{hw}\langle}T (\psi_1, \psi_2)^T,(\psi_1, \psi_2)^T{\color{hw}\rangle}_{Y\times Y'}
={\color{hw}\langle} (f_1,f_2)^T,(\psi_1, \psi_2)^T{\color{hw}\rangle}_{Y\times Y'}\\\no
&=&(q_1\psi_1,\psi_1)_{D_1}-(w_1,\psi_1)_{D_1}-(w_2,\psi_1)_{D_1}
-(w_1,\psi_2)_{D_2}+(q_2\psi_2,\psi_2)_{D_2}-(w_2,\psi_2)_{D_2}\\ \label{3.20}
&=&:I_1+I_2+I_3+I_4+I_5+I_6,
\en
where $(f_1,f_2)^T\in Y'$ is defined by (\ref{3.11}) with $(\varphi_1,\varphi_2)^T$ replaced by $(\psi_1, \psi_2)^T$.
In view of the fact that $\I [n_1(x)]\geq 0$ and $\I [n_2(x)]\geq 0$, we can easily observe that $\I I_1\leq 0$ and $\I I_5\leq 0$. By the definition of the function $w_1$ and the Green's theorem we derive
\be\no
I_2&=&\int_{D_1} w_1\ov{(\Delta w_1+k^2w_1)}dx\\\no
&=& \int_{\pa D}\frac{\pa \ov{w}_1}{\pa \nu}w_1ds
   -\int_{D_1}\left(|\nabla w_1|^2-k^2|w_1|^2\right)dx\\\label{3.21}
&=&\int_{\pa B_R}\frac{\pa \ov{w}_1}{\pa r}w_1ds
   -\int_{B_R}\left(|\nabla w_1|^2-k^2|w_1|^2\right)dx,
\en
where $B_R\supset D$ is a ball centered at $0$ with radius $R$.
It then follows from the Sommerfeld radiation condition that
\be\label{3.22}
\I I_2
=\I\left(\lim_{R\to\ify}\int_{\pa B_R}\frac{\pa \ov{w}_1}{\pa r}w_1ds\right)
=-\frac{k}{(4\pi)^2}\int_{\mathbb{S}^2}|w_{1,\infty}|^2ds,
\en
where $w_{1,\infty}$ is the far-field pattern of $w_1$.
Similarly, we can show that
\be\label{3.23}
\I I_6
   =\I\left(\lim_{R\to\ify}\int_{\pa B_R}\frac{\pa \ov{w}_2}{\pa r}w_2ds\right)
=-\frac{k}{(4\pi)^2}\int_{\mathbb{S}^2}|w_{2,\infty}|^2ds
\en
as well as
\be\no
\I(I_3+I_4)
&=&\I\left[\lim_{R\to\infty}\left(\int_{\pa B_R}\frac{\pa \ov{w}_2}{\pa r}w_1
      -\frac{\pa \ov{w}_1}{\pa r}w_2ds\right)\right]\\\label{3.25}
&=&-\frac{2k}{(4\pi)^2}\Rt\left[\int_{\mathbb{S}^2}w_{1,\infty} \ov{w}_{2,\infty }ds\right],
\en
where $w_{2,\infty}$ is the far-field pattern of $w_2$. Denote by $w_{\infty}$ the far-field pattern of $w$.
Then the Cauchy-Schwarz inequality together with
(\ref{3.20})-(\ref{3.25}) yields
\be\label{eq1}
\I{\color{hw}\langle}T (\varphi_1, \varphi_2)^T,(\varphi_1, \varphi_2)^T{\color{hw}\rangle}_{Y\times Y'}
\leq -\frac{k}{(4\pi)^2}\|w_{\ify}\|^2_{L^2(\mathbb{S}^2)} \leq0,
\en
which proves the assertion (\ref{3.190}).

Secondly, let $\psi^{(0)}=(\psi_1^{(0)},\psi_2^{(0)})^{{\color{hw}T}}\in \ov{R(G^*)}$ such that $\I{\color{hw}\langle}T\psi^{(0)},\psi^{(0)}{\color{hw}\rangle}=0$.
We define
\ben
w^{(0)}(x)=\int_{D_1}\Phi(x,y){\color{hw}\ov{\psi_1^{(0)}(y)}}dy+\int_{D_2}\Phi(x,y){\color{hw}\ov{\psi_2^{(0)}(y)}}dy
\enn
and $w^{(0)}_\infty$ to be the far-field pattern of $w^{(0)}$.
Then {\color{hw}it follows from (\ref{eq1})} that $w^{(0)}_\infty=0$ and thus $w^{(0)}=0$ in $\R^3\se\ov{D}$
due to Rellich's Lemma.
Hence, {\color{hw}using this and the fact that $w^{(0)}\in H^2_{loc}(\R^3)$}, we obtain $w^{(0)}|_{D}\in H^1_0(D)$ {\color{hw}and ${\pa w^{(0)}}/{\pa\nu}=0$ on ${\pa D}$}. Since ${\color{hw}{\psi}^{(0)}}\in \ov{R(G^*)}$ there exists $\psi_j=G^*g_j$ such that $\psi_j\to {\color{hw}{\psi}^{(0)}}$ in $L^2(D)$ as $j\to\ify$. Further it follows from (\ref{3.3}) that
\be\label{3.26}
\left({\color{hw}\wid{q}_1}\wid{w}_j|_{D_1}, {\color{hw}\wid{q}}_2\wid{w}_j|_{D_2}\right)^{{\color{hw}T}}\to {\color{hw}\ov{\psi^{(0)}}}\quad{\color{hw}\text{in\;}L^2(D)}\;
\text{as\;}j\to\ify,
\en
where $\wid{q}_l=k^2(n_l-1), l =1,2$ and $\wid{w}_j$ is the total field of the problem (\ref{1.1})-(\ref{1.2}) corresponding to the incident field $\wid{w}^i_j{\color{hw}(x)}:=\int_{\mathbb{S}^2}e^{-ik {\color{hw}d\cdot x}}{\color{hw}\ov{{\color{black}g_j}({\color{hw}d})}}ds({\color{hw}d})$. We now define
\be\label{3.27}
w_j(x)=\int_{D_1}\Phi(x,y){\color{hw}\wid{q}_1(y)}\wid{w}_{1j}(y)dy
+\int_{D_2}\Phi(x,y){\color{hw}\wid{q}_2(y)}\wid{w}_{2j}(y)dy
\en
with $\wid{w}_{ij}:=\wid{w}_{j}|_{D_i}, i=1,2$. Then we have
$w_j\to w^{(0)}$ in $H^1(D)$ as $j\to\ify$.

It is noted that $\tr w_j+k^2w_j=-{\color{hw}\wid{q}}\wid{w}_j$ in $D$ with ${\color{hw}\wid{q}}:=\wid{q}_l$ in $D_l, l=1,2$. Then by the Green's representation
theorem, we derive
\ben
\wid{w}_j(x)&=&\int_{\pa D}\left\{
   \frac{\pa \wid{w}_j(y)}{\pa\nu(y)}\Phi(x,y)-\wid{w}_j(y)\frac{\pa\Phi(x,y)}{\pa\nu(y)}\right\}ds(y)\\
   &&+\int_{D}\Phi(x,y){\color{hw}\wid{q}}(y)\wid{w}_j(y)dy
   \quad \text{in\;} D.
\enn
This in combination with the definition of $w_j$ in (\ref{3.27}) yields
\be\no
\wid{w}_j(x)-w_j(x)
&=&\int_{\pa D}\left\{\frac{\pa \wid{w}_j(y)}{\pa\nu(y)}\Phi(x,y)
             -\wid{w}_j(y)\frac{\pa\Phi(x,y)}{\pa\nu(y)}\right\}ds(y)\\
&=&\int_{\mathbb{S}^2}e^{-ikd\cdot x}\ov{g_j(d)}ds(d):=v_j(x)\qquad \text{in\;} D.
\en
Then we conclude that
\be\label{3.28}
v_j\to\frac{1}{{\color{hw}\wid{q}}}{\color{hw}\ov{\psi^{(0)}}}-w^{(0)}\qquad \text{in\;} L^2(D)
\en
due to the fact that $\wid{w}_j\to\frac{1}{{\color{hw}\wid{q}}}{\color{hw}\ov{\psi^{(0)}}} $ and $w_j\to w^{(0)}$ in $L^2(D)$. Setting $v:=\frac{1}{{\color{hw}\wid{q}}}{\color{hw}\ov{\psi^{(0)}}}-w^{(0)}$, then {\color{hw}$(w^{(0)},v)\in H_0^1(D)\times L^2(D)$}, $v_j\in H^2(D)$ satisfying $\tr v_j+k^2v_j=0$ in $D$, {\color{hw}$v_j\rightarrow v$ in $L^2(D)$} and ${\color{hw}w^{(0)}}|_D$ solves the following problem
\ben
\tr w^{(0)}+k^2nw^{(0)}=-{\color{hw}\wid{q}}v\qquad \text{in\;} D.
\enn
{\color{hw}From this and the fact that ${\pa w^{(0)}}/{\pa\nu}=0$ on ${\pa D}$, it follows that $(w^{(0)},v)$ satisfy
(\ref{3.19}) with $u$ and $w$ replaced by $w^{(0)}$ and $v$, respectively.}
Since $k^2>0$ is not an interior transmission eigenvalue in the sense of Definition \ref{def3.6}, it then follows that $(w^{(0)},v)$ has to vanish {\color{hw}in $D$}. Thus $\psi^{(0)}=0$ due to the fact that $\tr w^{(0)}+k^2w^{(0)}=-{\color{hw}\ov{\psi^{(0)}}}$ in $D$. {\color{hw}This, together with the assertion (\ref{3.190}), proves the statement (ii) of this lemma.}
\end{proof}
Finally, the Range Identity in \cite[Theorem 2.15]{K2008} in combination with Theorems \ref{thm3.2}, \ref{thm3.5} and \ref{thm3.7} gives the following main theorem in this section.
\begin{theorem}\label{thm3.8}
Assume that the conditions presented in Theorem \ref{thm3.7} hold true. Then
\ben
   z\in D \quad&\Longleftrightarrow&\quad \phi_z\in R(F_{m,\#}^{\frac{1}{2}})\\
            &\Longleftrightarrow&\quad
             W_m(z):=\left[\sum_j\frac{|\langle\phi_z,\psi^{(m)}_j \rangle_{L^2(\mathbb{S}^2)}|^2}{\la^{(m)}_j}\right]^{-1}>0
\enn
for every fixed $m\in\N$,
where $\{\la^{(m)}_j;\psi^{(m)}_j\}_{j\in\N}$ is an eigen-system of the self-adjoint operator
$F_{m,\#}:=|\Rt F_m|+|\I F_m|$.
\end{theorem}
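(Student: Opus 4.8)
The plan is to deduce Theorem~\ref{thm3.8} from the Range Identity \cite[Theorem 2.15]{K2008}, applied to the factorization $F_m=\wid{H}^*\wid{M}_m\wid{H}$ recorded in (\ref{3.17}), and then to convert the resulting range equality into the stated series criterion by Picard's theorem. Thus the argument splits into two largely bookkeeping steps: first, verify that the triple $(\wid{H},\wid{M}_m,F_m)$ meets the hypotheses of \cite[Theorem 2.15]{K2008} relative to the Gelfand triple built on $\wid{Y}=H^{\frac12}(\pa D_1)\times L^2(D_2)$; second, exploit the spectral decomposition of the self-adjoint compact operator $F_{m,\#}$.

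For the first step I would assemble the following ingredients, all of which are already available. The operator $\wid{H}:L^2(\mathbb{S}^2)\to\wid{Y}$ is compact and injective, so that $\wid{H}^*$ has dense range; this is Theorem~\ref{thm3.5}. By Theorem~\ref{thm3.7}(i), $\Rt\wid{M}_m=\wid{M}_m^{(1)}+\wid{M}_m^{(2)}$ with $-\wid{M}_m^{(1)}$ self-adjoint and coercive and $\wid{M}_m^{(2)}$ compact; since it is $-\wid{M}_m^{(1)}$, not $\wid{M}_m^{(1)}$, that is coercive, the real-part hypothesis of the Range Identity is satisfied after the phase rotation $e^{i\pi}$, i.e.\ for $\Rt[e^{i\pi}\wid{M}_m]=-\Rt\wid{M}_m$. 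Next, the computation in the proof of Theorem~\ref{thm3.7} gives $\I\langle\wid{M}_m\varphi,\varphi\rangle=-\I\langle T\psi,\psi\rangle$ with $\psi=T^{-1*}A\varphi$ (the contribution of $J_m$ dropping out because $N_{i,\pa D_1}$ is self-adjoint), and the inequality (\ref{eq1}) — derived there for an arbitrary density, not merely for $\psi\in\ov{R(G^*)}$ — shows this is $\geq 0$ for every $\varphi\in\wid{Y}$, so $\I\wid{M}_m$ is non-negative on $\wid{Y}$; Theorem~\ref{thm3.7}(ii) upgrades this to strict positivity on $\ov{R(\wid{H})}$. With these four facts, \cite[Theorem 2.15]{K2008} yields that $F_{m,\#}=|\Rt F_m|+|\I F_m|$ is positive and $R(F_{m,\#}^{1/2})=R(\wid{H}^*)$. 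Combining this with Theorem~\ref{thm3.5}, which asserts $z\in D\Leftrightarrow\phi_z\in R(\wid{H}^*)$, gives the first equivalence $z\in D\Leftrightarrow\phi_z\in R(F_{m,\#}^{1/2})$.

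For the second step, observe that $\ov{R(F_{m,\#}^{1/2})}=\ov{R(\wid{H}^*)}=L^2(\mathbb{S}^2)$ by the density statement in Theorem~\ref{thm3.5}, so the compact self-adjoint positive operator $F_{m,\#}$ has trivial kernel; hence all its eigenvalues $\la^{(m)}_j$ are positive and the eigenfunctions $\psi^{(m)}_j$ form an orthonormal basis of $L^2(\mathbb{S}^2)$. Picard's theorem then characterizes the range of $F_{m,\#}^{1/2}$: one has $\phi_z\in R(F_{m,\#}^{1/2})$ if and only if $\sum_j|\langle\phi_z,\psi^{(m)}_j\rangle_{L^2(\mathbb{S}^2)}|^2/\la^{(m)}_j<\infty$, which is precisely $W_m(z)>0$, since $W_m(z)$ is the reciprocal of that series and vanishes exactly when the series diverges. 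Together with Step~1 this establishes both equivalences for every fixed $m\in\N$.

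The analytic substance has essentially all been placed in Theorems~\ref{thm3.2},~\ref{thm3.5} and~\ref{thm3.7}, so the remaining work is routine; the one place that genuinely requires care is matching the sign conventions of the Range Identity — namely, recognizing that one must invoke the rotated real part $\Rt[e^{i\pi}\wid{M}_m]$ because it is $-\Rt\wid{M}_m$ (driven by $\Rt[n_2]<1$ in $D_2$ together with the coercivity of $-N_{i,\pa D_1}$) that is coercive, while simultaneously verifying that $\I\wid{M}_m$ is non-negative on all of $\wid{Y}$ rather than only on $\ov{R(\wid{H})}$, for which one reads the estimate (\ref{eq1}) as valid for arbitrary densities.
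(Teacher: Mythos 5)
Your proposal is correct and follows exactly the route the paper takes: the paper's proof of Theorem \ref{thm3.8} is the single sentence invoking the Range Identity of \cite[Theorem 2.15]{K2008} together with Theorems \ref{thm3.2}, \ref{thm3.5} and \ref{thm3.7}, and you have simply filled in the verification of its hypotheses and the Picard-series translation. Your two added observations --- that the coercivity hypothesis is met only after the phase rotation $e^{i\pi}$ since it is $-\Rt\wid{M}_m$ that is coercive, and that the non-negativity of $\I\wid{M}_m$ on all of $\wid{Y}$ follows because the estimate (\ref{eq1}) holds for arbitrary densities, not just those in $\ov{R(G^*)}$ --- are exactly the details the paper leaves implicit, and both are handled correctly.
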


\begin{remark}\label{rm1}{\rm
Since the classical factorization method can not be directly applied to deal with our inverse problem associated with the complex refractive index. Then we instead construct a sequence of perturbed operators $F_m$ by (\ref{2.140}) of the far-field operator $F$. It is shown in this section that $F_m$ has a factorization satisfying the Range Identity in \cite[Theorem 2.15]{K2008} for every $m\in\N$. Consequently, the support of the inhomogeneous medium $D$ can be recovered from the spectral data of $F_{m, \#}$ for every $m\in\N$. We point out that,  {\color{hw}due to (\ref{3.14}),} if $m_0$ is sufficiently large then the exact operator $F_\#$ can be regarded as a sufficiently small perturbation of $F_{m_0,\#}$ and the noisy operator $F^\delta_\#$ with the noise level $\delta$ of $F_\#$ can also be regarded as a
sufficiently small perturbation of $F^\delta_{m_0,\#}$ with the noise level $\delta$. Based on the above discussions, in the numerical examples presented in the next section, we just use the spectral data of $F$ and $F^\delta$ to numerically reconstruct the shape and location of $D$. }
\end{remark}

\begin{remark}{\color{hw}
Theorem \ref{thm3.8} remains true for the
two-dimensional case. The proof is similar with
minor modifications.}
\end{remark}

\section{Numerical examples}
\setcounter{equation}{0}

In this section, numerical experiments in two dimensions are carried out to demonstrate the efficiency of the approximate factorization method.
To generate the synthetic far-field data, we make use of
the finite element method on a truncated domain
enclosed by a PML layer with uniform meshes (see e.g. \cite{CW08} for the PML technique).
Further, the far-field data $u_\infty(\wi{x};d)$
are discretized for a finite number of observation directions $\wi{x}_r\in \mathbb{S}^1$ and incident directions
$d_s\in \mathbb{S}^1$ with $r,s=1,2,\ldots,M$, which are equidistantly
distributed on the unit circle $\mathbb{S}^1$. Thus the measured data are obtained as the
matrix $F_M=(u_\infty(\wi{x}_r;d_s))_{1\leq r,s \leq M}\in \mathbb{C}^{M\times M}$. Then the indicator function
$W(z)$ for the far-field operator $F$ is approximated as follows:
\be\label{5.1}
W_M(z)=\left[\sum^M_{p=1}\frac{1}{\lambda_p}
\left|\sum^M\limits_{q=1}
\phi_{z,q}\overline{\psi_{p,q}}\right|^2\right]^{-1}
\quad \textrm{for\;}z\in{\mathbb R}^2,
\en
where $\{\phi_{z,q}\}^{M}_{q=1}$ is the discretization of
the test function $\phi_z$
and $\{\lambda_p;\psi_p\}^M_{p=1}$ is
the eigen-system of the self-adjoint matrix
$F_{M,\#}:=|\textrm{Re}(F_M)|+|\textrm{Im}(F_M)|$
with $\psi_p=(\psi_{p,q})^M_{q=1}$.
From Theorem \ref{thm3.8}, it is expected that $W_M(z)$ is
much bigger for $z\in D$ than that for $z\notin D$.

In each examples, we will also show the reconstructed results for the  approximate factorization method from noisy data. For the noisy data,
a complex-valued noise matrix $X$ is added to the
data matrix $F_M$, where $X=(x_{rs})_{1\leq r,s\leq M}$ with $x_{rs}=\xi_{rs}+i\zeta_{rs}$ and
$\xi_{rs},\zeta_{rs}$
are normally distributed random numbers in $[-1,1]$.
Then the perturbed matrix with noisy level $\delta>0$
can simulated as follows:
\be
&&F^\delta_M:=F_M+\delta\frac{X}{\|X\|_2}\|F_M\|_2,\\
&&(F^\delta_M)_\#:=|\textrm{Re}(F^\delta_M)|+|\textrm{Im}(F^\delta_M)|.
\en
Accordingly, the truncated indicator function $W_M(z)$
can be computed from the eigen-system of the
perturbed matrix $(F^\delta_M)_\#$ which is similar as (\ref{5.1}).

In the following examples, we set $M=64$, $k=5$ and the test curves for the boundary $\pa D$ are given in Table \ref{t1}.
The indicator function $W_N(z)$ is plotted against the sampling
point $z\in\mathbb{R}^2$.

\begin{table}
\centering
\begin{tabular}{ll}
\hline
\textbf{Curve type}& \textbf{Parametrization:}\\
\hline
Kite shaped & $x(t)=(\cos{t}+0.65\cos{2t}-0.65,1.5\sin{t})$, $t\in[0,2\pi]$\\
Rounded square & $x(t)=$$({1}/{2})(\cos^3{t}+\cos{t},\sin^3{t}+\sin{t})$, $t\in[0,2\pi]$\\
Rounded triangle & $x(t)=(2+0.3\cos{3t})(\cos{t},\sin{t})$, $t\in[0,2\pi]$\\
\hline
\end{tabular}
\caption{Parametrization of the curve}\label{t1}
\end{table}

\textbf{Example 1.} In this example, we consider the case when
$\pa D$ is a rounded triangle-shaped boundary and the  refractive index $n$ in $D$ is given by
\ben
n(x)=\left\{
  \begin{array}{ll}
    2+2i & \text{for}\;x\in D_1,\\
    0.5+2i & \text{for}\; x\in D_2,
  \end{array}
\right.
\enn
where $D_1=\{(x_1,x_2)\in D: x_2>0\}$ and $D_2=\{(x_1,x_2)\in D: x_2<0\}$. See Figure \ref{fig1}(a) for
the physical configuration.
The reconstruction results of the boundary $\pa D$ are presented in Figure \ref{fig1}
by using the far-field data without noise, with 5\% noise
and with 10\% noise, respectively.

\begin{figure}[htbp]
\begin{minipage}[t]{0.4\linewidth}
\centering
\includegraphics[width=3in]{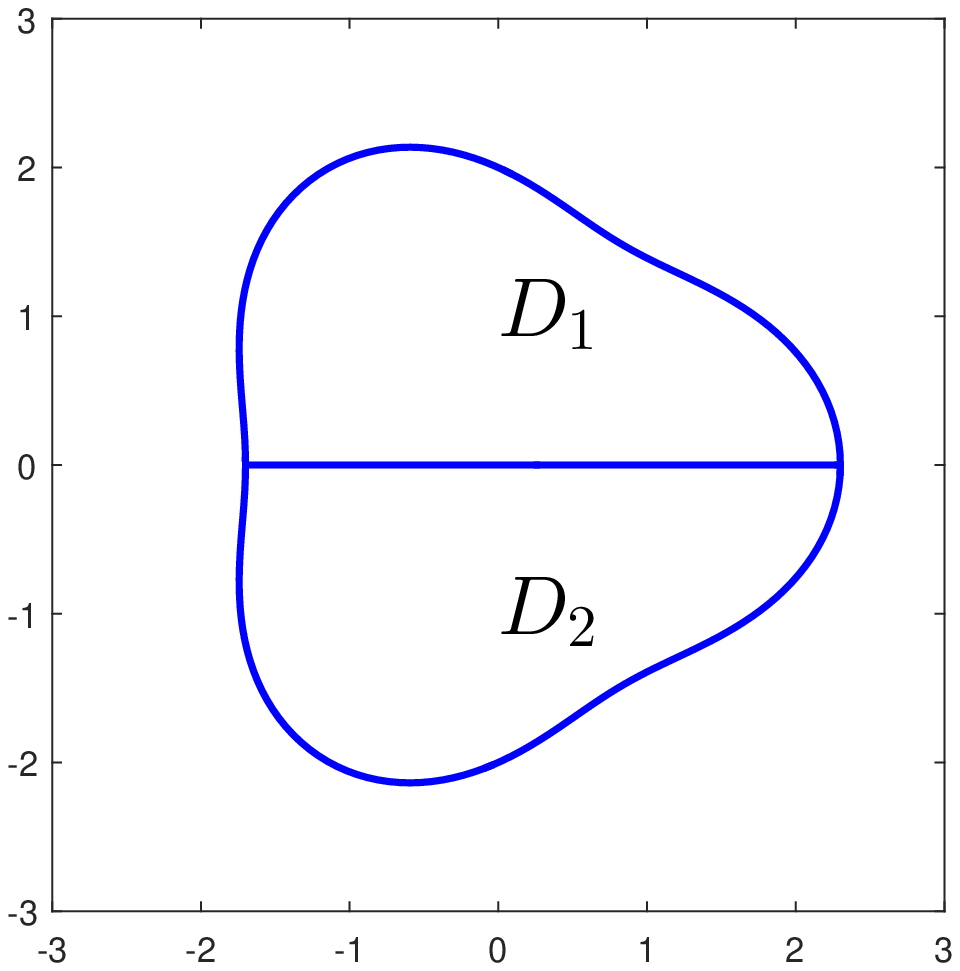}
(a) Physical configuration
\end{minipage}\qquad\qquad
\begin{minipage}[t]{0.4\linewidth}
\centering
\includegraphics[width=3in]{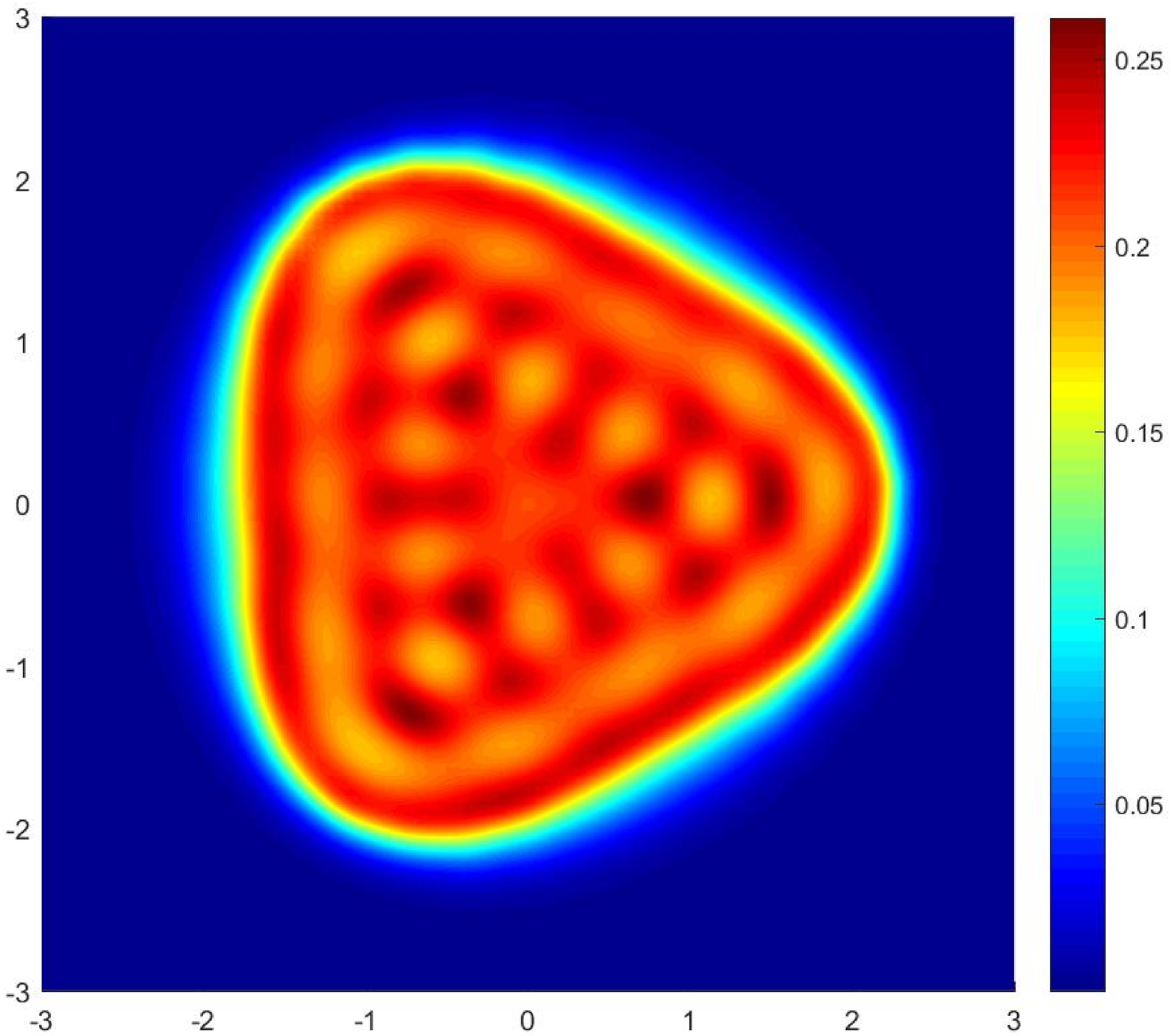}
(b) k=5, no noise
\end{minipage}
%
\begin{minipage}[t]{0.4\linewidth}
\centering
\includegraphics[width=3in]{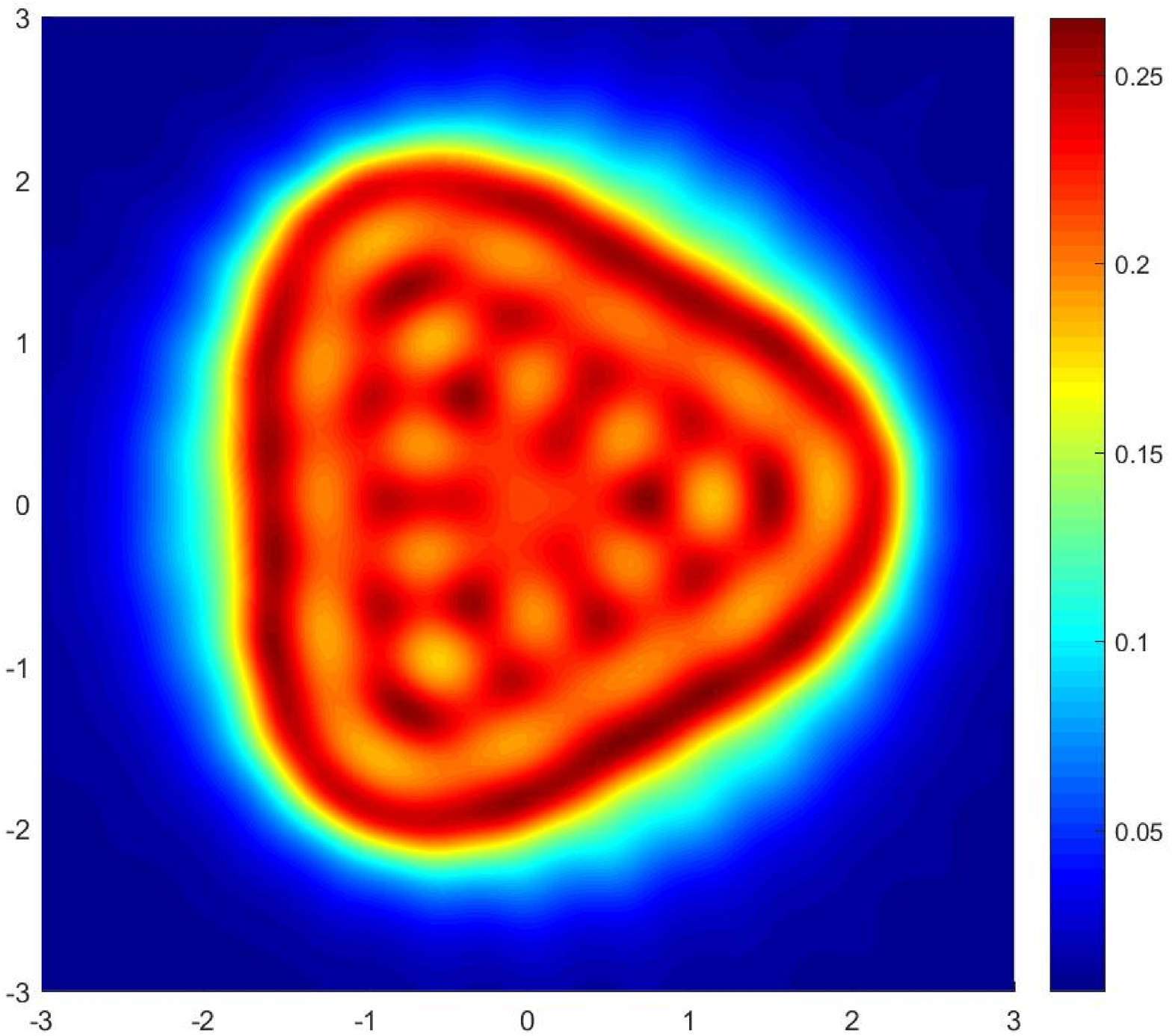}
(c) k=5, 5\% noise
\end{minipage}\qquad\qquad
\begin{minipage}[t]{0.4\linewidth}
\centering
\includegraphics[width=3in]{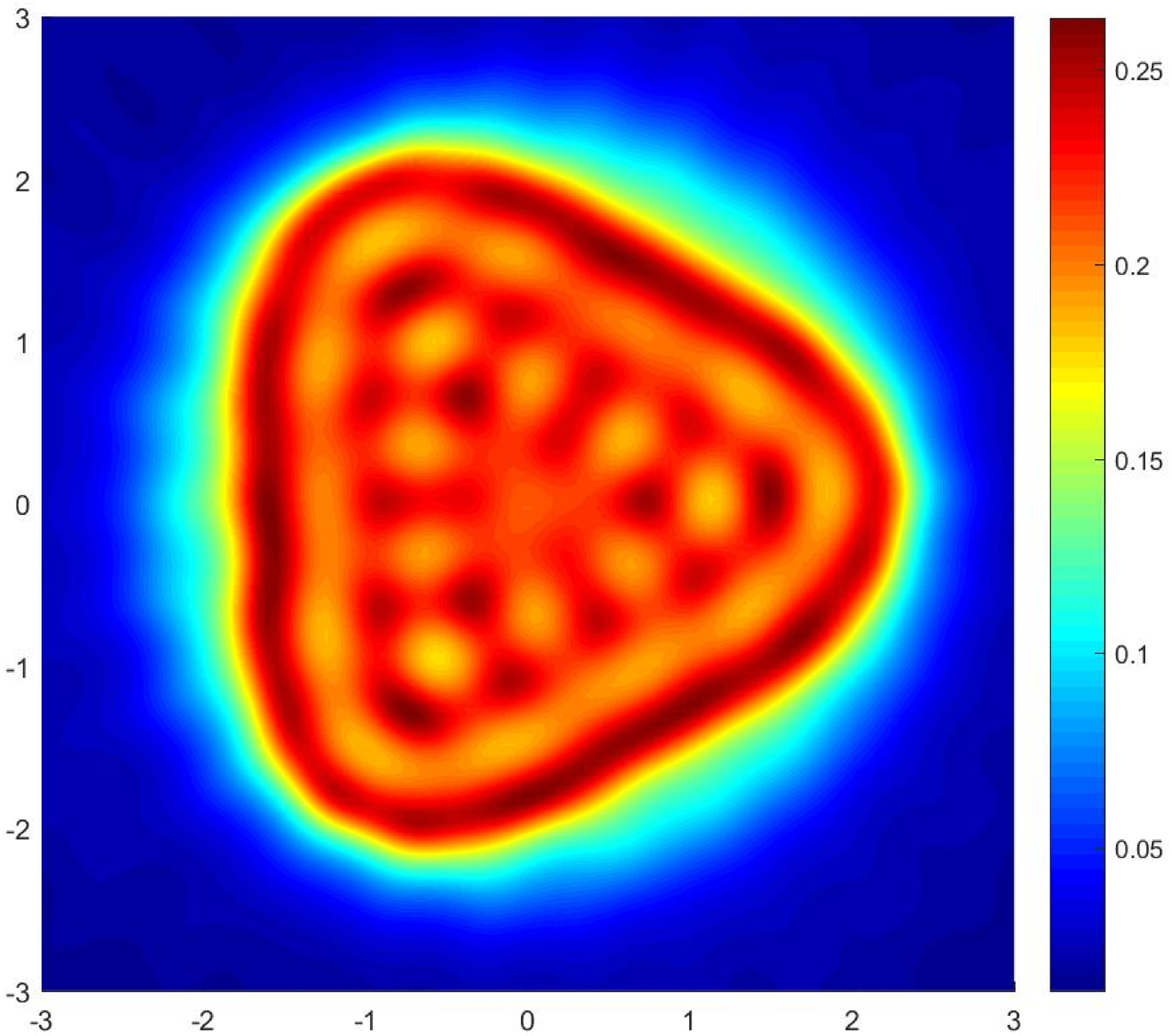}
(d) k=5, 10\% noise
\end{minipage}
\caption{Reconstruction of rounded triangle-shaped boundary $\pa D$.
The refractive index is given by $n(x)=2+2i$ in $D_1$ and $n(x)=0.5+2i$ in $D_2$.
}\label{fig1}
\end{figure}

\textbf{Example 2.} In this example, we consider the case when
$\pa D$ is a rounded square-shaped boundary and the  refractive index $n$ in $D$ is given by
\ben
n(x)=\left\{
  \begin{array}{ll}
    2+2i & \text{for}\;x\in D_1,\\
    0.5+2i & \text{for}\; x\in D_2,
  \end{array}
\right.
\enn
where $D_1=\{(x_1,x_2)\in D: x_1<0\}$ and $D_2=\{(x_1,x_2)\in D: x_1>0\}$.
See Figure \ref{fig2}(a) for
the physical configuration.
The reconstruction results of the boundary $\pa D$ are presented in Figure \ref{fig2}
by using the far-field data without noise, with 5\% noise
and with 10\% noise, respectively.

\begin{figure}[htbp]
\begin{minipage}[t]{0.4\linewidth}
\centering
\includegraphics[width=3in]{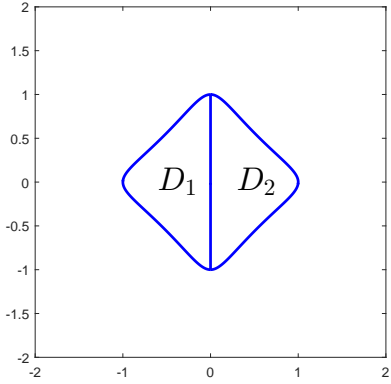}
(a) Physical configuration
\end{minipage}\qquad\qquad
\begin{minipage}[t]{0.4\linewidth}
\centering
\includegraphics[width=3in]{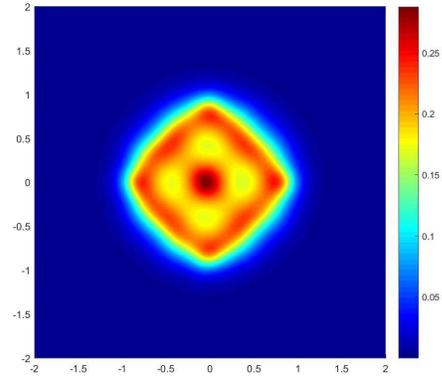}
(b) k=5, no noise
\end{minipage}
%
\begin{minipage}[t]{0.4\linewidth}
\centering
\includegraphics[width=3in]{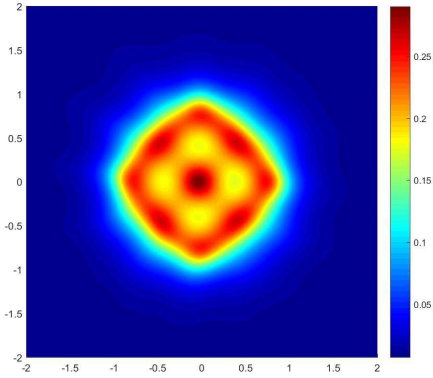}
(c) k=5, 5\% noise
\end{minipage}\qquad\qquad
\begin{minipage}[t]{0.4\linewidth}
\centering
\includegraphics[width=3in]{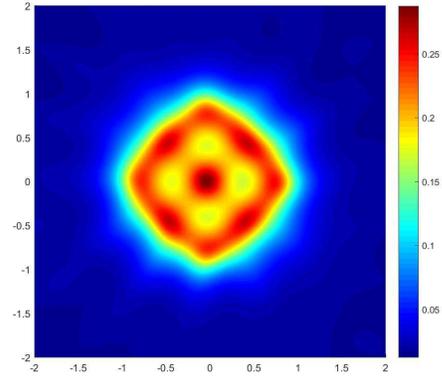}
(d) k=5, 10\% noise
\end{minipage}
\caption{Reconstruction of rounded square-shaped boundary $\pa D$.
The refractive index is given by $n(x)=2+2i$ in $D_1$ and $n(x)=0.5+2i$ in $D_2$.
}\label{fig2}
\end{figure}

\textbf{Example 3.} In this example, we consider the case when
$\pa D$ is a kite-shaped boundary and the refractive index $n$ in $D$ is given by
\ben
n(x)=\left\{
  \begin{array}{ll}
    0.5+2i & \text{for}\;x\in D_1,\\
    2+2i& \text{for}\; x\in D_2,
  \end{array}
\right.
\enn
where $D_1=\{(x_1,x_2)\in D: x_2>0\}$ and $D_2=\{(x_1,x_2)\in D: x_2<0\}$.
See Figure \ref{fig3}(a) for
the physical configuration.
The reconstruction results of the boundary $\pa D$ are presented in Figure \ref{fig3}
by using the far-field data without noise, with 5\% noise
and with 10\% noise, respectively.
\begin{figure}[htbp]
\begin{minipage}[t]{0.4\linewidth}
\centering
\includegraphics[width=3in]{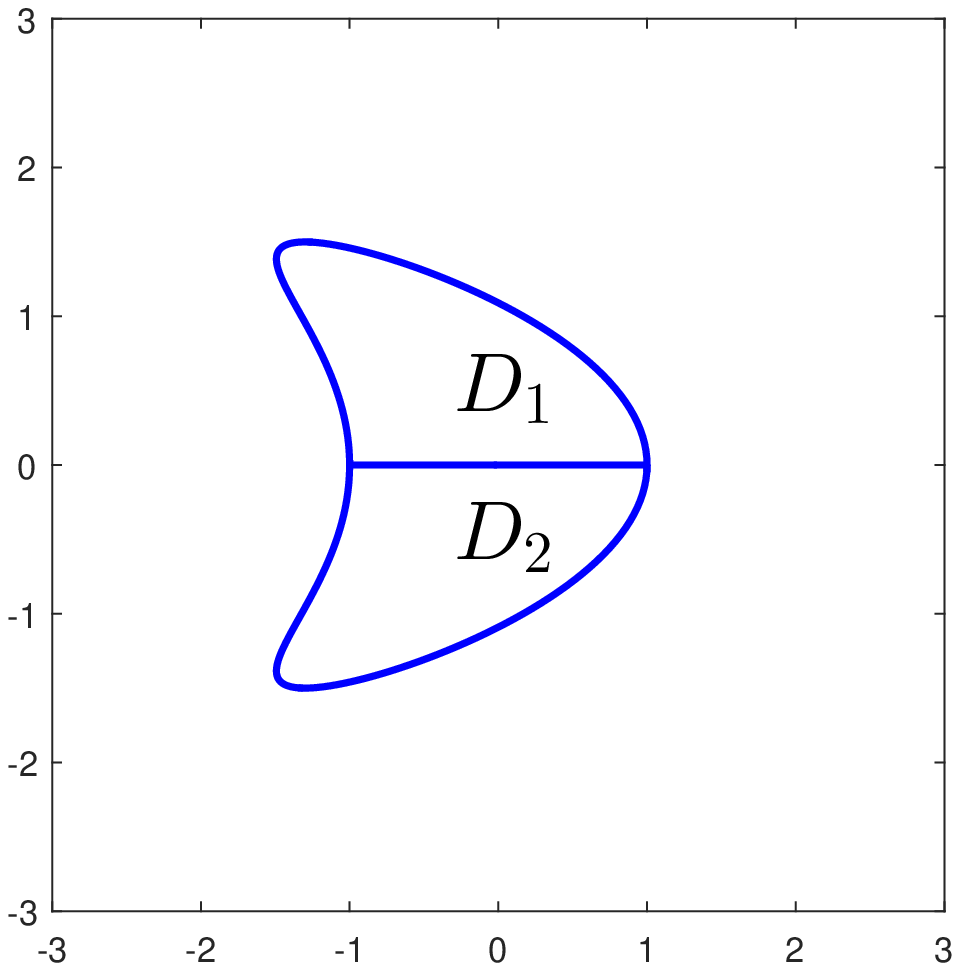}
(a) Physical configuration
\end{minipage}\qquad\qquad
\begin{minipage}[t]{0.4\linewidth}
\centering
\includegraphics[width=3in]{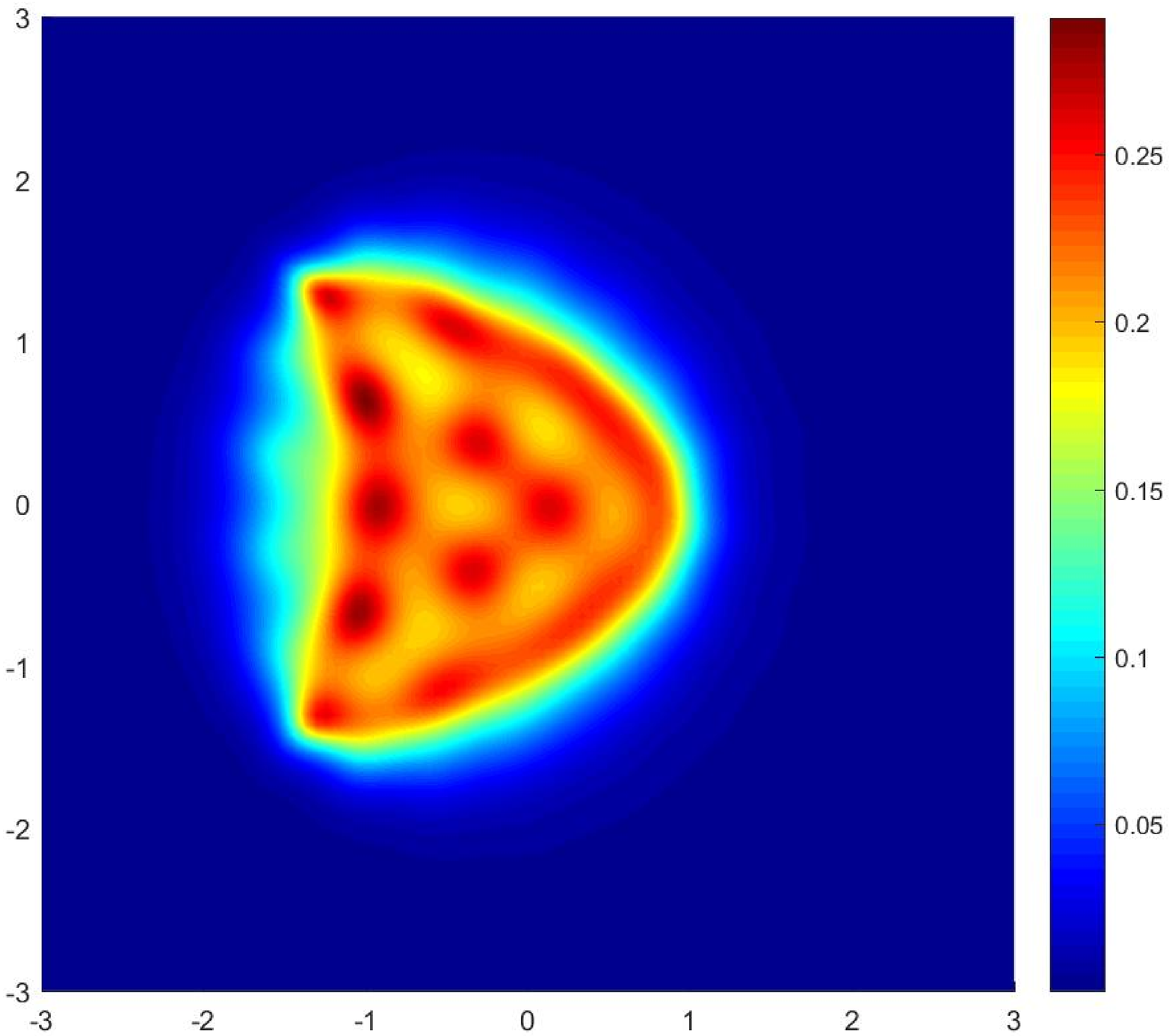}
(b) k=5, no noise
\end{minipage}
%
\begin{minipage}[t]{0.4\linewidth}
\centering
\includegraphics[width=3in]{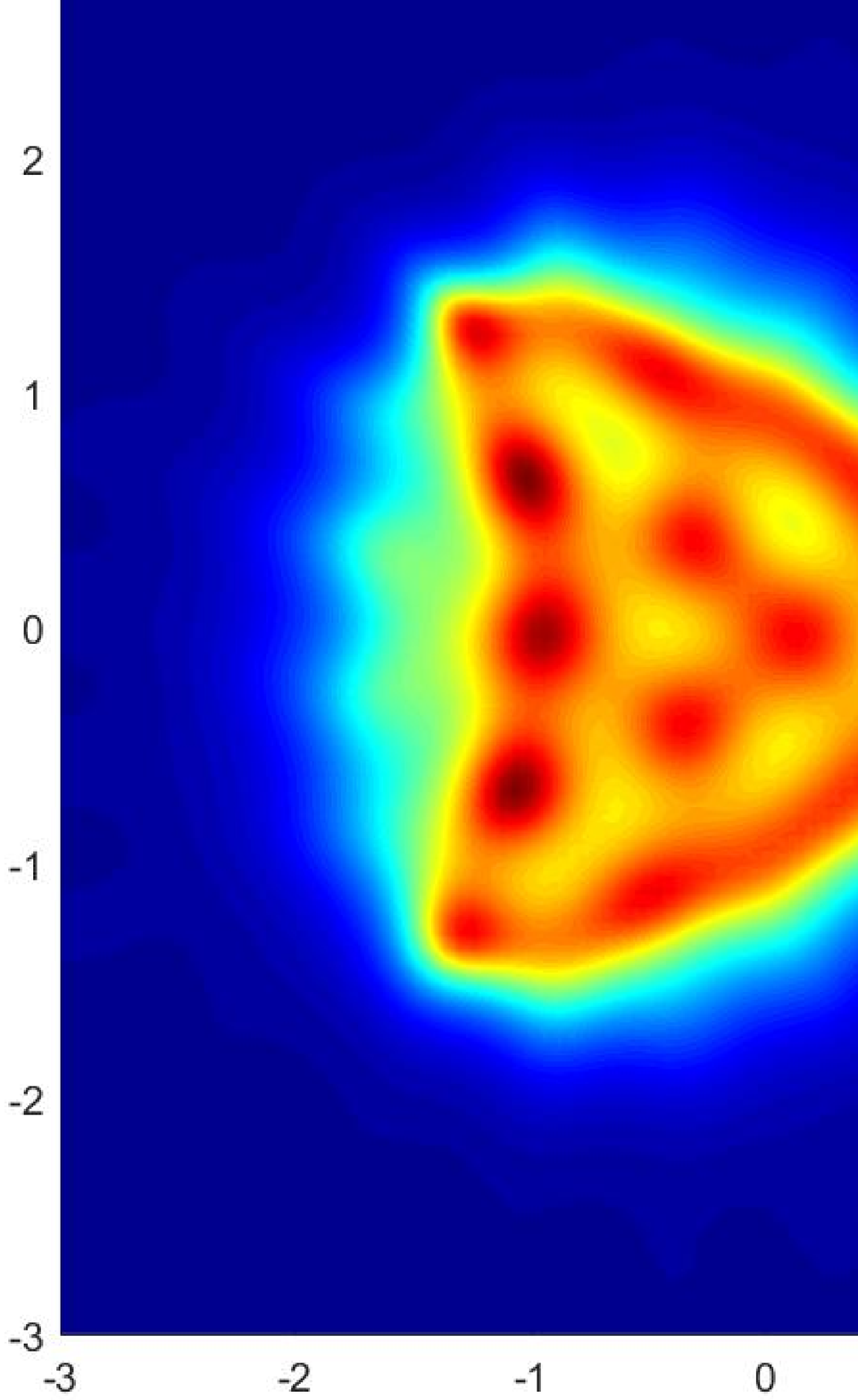}
(c) k=5, 5\% noise
\end{minipage}\qquad\qquad
\begin{minipage}[t]{0.4\linewidth}
\centering
\includegraphics[width=3in]{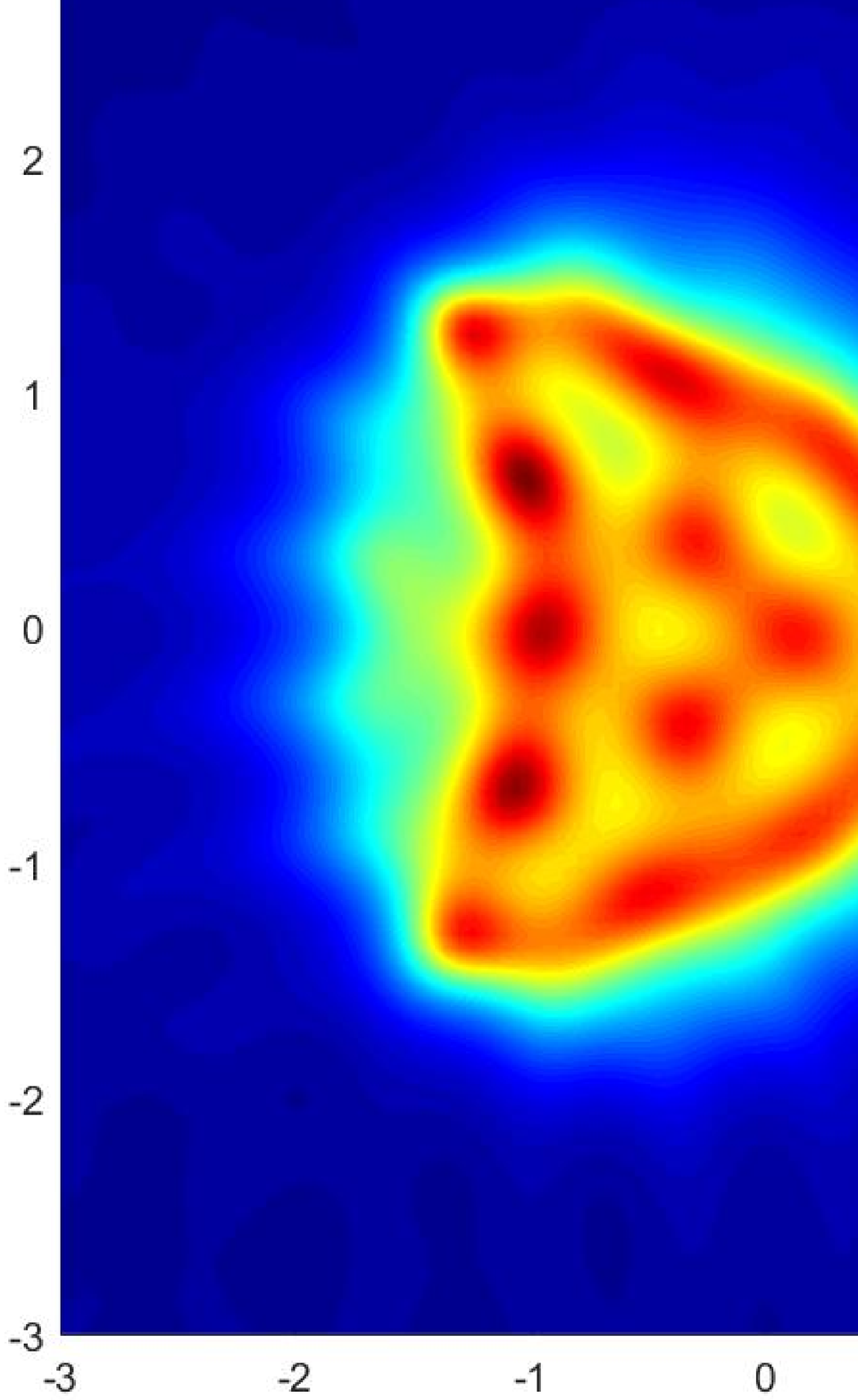}
(d) k=5, 10\% noise
\end{minipage}
\caption{Reconstruction of kite-shaped boundary $\pa D$.
The refractive index is given by $n(x)=0.5+2i$ in $D_1$ and $n(x)=2+2i$ in $D_2$.
}\label{fig3}
\end{figure}

{\color{hw}From the above three examples and the other cases carried out but not presented here it can be seen that
the shape and location of the obstacle $D$ is
numerically reconstructed from the spectral data of the far-field operator for the case of an inhomogeneous medium with complex refractive index. This indeed verifies the theoretical analysis of the approximate factorization method that presented in Section \ref{se2}. In the future, motivated by this work,
we hope to investigate the exact factorization of the far-field operator. Moreover, we plan to extend our result to the case of electromagnetic scattering problems, which is more challenging.}

\section*{Acknowledgements}
The work of F. Qu was supported by the NNSF of China Grant No.11871416, 11401513 and NSF of Shandong Province of China Grant No. ZR2017MA044. The work of H. Zhang was supported by the NNSF of China Grant No. 11501558 and 11871466.


\end{document}